\documentclass[leqno, letterpaper]{amsart}
\usepackage{ marvosym }
\usepackage{amssymb, mathtools, amsmath}
\usepackage{tikz, tikz-cd, graphicx}
\usepackage[normalem]{ulem}
\usepackage{color}
\usepackage{hyperref}
\usetikzlibrary{matrix,arrows,decorations.pathmorphing}

\usepackage{hyperref}
\usepackage{scalerel,stackengine}
\stackMath
\newcommand\reallywidehat[1]{%
\savestack{\tmpbox}{\stretchto{%
  \scaleto{%
    \scalerel*[\widthof{\ensuremath{#1}}]{\kern-.6pt\bigwedge\kern-.6pt}%
    {\rule[-\textheight/2]{1ex}{\textheight}}
  }{\textheight}%
}{0.5ex}}%
\stackon[1pt]{#1}{\tmpbox}%
}
\parskip 1ex

\newcommand{\Addresses}{{
  \bigskip
  \footnotesize

  Damjan PI\v{S}TALO, \textsc{Department of Mathematics, University of Luxembourg,
    2, place de l’Université
L-4365 Esch-sur-Alzette, Luxembourg}, E-mail: \textit{damjan.pistalo@uni.lu}.
}}

\usepackage{thmtools}
\usepackage{thm-restate}

\usepackage{amscd}
\usepackage{amsmath}
\usepackage{amsthm}
\usepackage{mathrsfs}
\usepackage{latexsym}
\usepackage{amssymb}
\usepackage{amsfonts}
\theoremstyle{plain}

\usepackage{tikz}
\usetikzlibrary{arrows,chains,matrix,positioning,scopes,snakes}

\tikzset{>=stealth',every on chain/.append style={join},
         every join/.style={->}}
\tikzset{
    >=stealth',
    punkt/.style={
           rectangle,
           rounded corners,
           draw=black, very thick,
           text width=6.5em,
           minimum height=2em,
           text centered},
    pil/.style={
           ->,
           thick,
           shorten <=2pt,
           shorten >=2pt,}
}

\setcounter{tocdepth}{5}
\usepackage[T1]{fontenc}
\setcounter{page}{1}

\tolerance=500 \textwidth15.6cm \textheight20cm \hoffset-1.6cm

\newcommand{\bee}{\begin{enumerate}}
\newcommand{\eee}{\end{enumerate}}
\newcommand{\benn}{\begin{equation*}}
\newcommand{\eenn}{\end{equation*}}
\newcommand{\be}{\begin{equation}}
\newcommand{\ee}{\end{equation}}
\newcommand{\bean}{\begin{eqnarray}}
\newcommand{\eean}{\end{eqnarray}}
\newcommand{\bea}{\begin{eqnarray*}}
\newcommand{\eea}{\end{eqnarray*}}

\newcommand{\op}[1]{\!\!\mathop{\rm ~#1}\nolimits}

%

\newcommand{\cyclic}{\mathop{\kern0.9ex{{+}
\kern-2.15ex\raise-.25ex\hbox{\Large\hbox{$\circlearrowright$}}}}\limits}

\newtheorem{theo}{Theorem}
\newtheorem{rem}[theo]{Remark}
\newtheorem{prop}[theo]{Proposition}

\newtheorem{cor}[theo]{Corollary}
\newtheorem{ex}[theo]{Example}
\newtheorem{defi}[theo]{Definition}

\newtheorem{theorem}[theo]{Theorem}
\newtheorem{lemma}[theo]{Lemma}




   \def\i{\imath}

\usepackage{stackrel}
\usepackage{epigraph}

\setlength{\epigraphwidth}{0.6\textwidth}
\usepackage{lmodern}
\usepackage{here}
\usepackage{url}
\usepackage{hyperref}

\usepackage{hyperref}
\usepackage{tikz-cd}

%

\newsavebox{\pullback}
\sbox\pullback{%
\begin{tikzpicture}%
\draw (0,0) -- (1ex,0ex);%
\draw (1ex,0ex) -- (1ex,1ex);%
\end{tikzpicture}}

   \numberwithin{equation}{section}

\tikzcdset{scale cd/.style={every label/.append style={scale=#1},
    cells={nodes={scale=#1}}}}


\title{Pro-nilpotently extended dgca-s and SH Lie-Rinehart pairs}
\author{Damjan Pi\v{s}talo}
\date{\today}
\subjclass[2020]{ 55U35, 17B55, 16W25}
\setcounter{tocdepth}{1}

\begin{document}

\maketitle

\begin{abstract}
Category of pro-nilpotently extended differential graded commutative algebras is introduced. Chevalley-Eilenberg construction provides an equivalence between its certain full subcategory and the opposite to the full subcategory of strong homotopy Lie Rinehart pairs with strong homotopy morphisms, consisting of pairs $(A,M)$ where $M$ is flat as a graded $A$-module. It is shown that pairs $(A,M)$, where $A$ is a semi-free dgca and $M$ a cell complex in $\op{Mod}(A)$, form a category of fibrant objects by proving that their  Chevalley-Eilenberg complexes form  a category of cofibrant objects.
\end{abstract}

\tableofcontents

\section{Introduction}
Lie algebras have a solid homotopy theory. Its development dates back Quillen's work on rational homotopy theory \cite{Quil}, which, among many other things, establishes an adjunction  
between dg Lie algebras (denoted by $\tt DGL$) and a certain class of differential graded cocommutative coalgebras (denoted by $\tt DGC$), whose right adjoint sends a dg Lie algebra to its homological Chevalley-Eilenberg complex. In \cite{Hin}, Hinich puts cofibrantly generated model structures on both categories, promoting the above adjunction to a Quillen equivalence. Fibrant objects in $\tt DGC$ are exactly Chevalley-Eilenberg complexes of Lie $\infty$-algebras, and morphisms between them are equivalently $\infty$-morphisms of  Lie $\infty$-algebras. In \cite{Pri}, J. Pridham dualized differential graded cocommutative coalgebras into so-called $\mathbb{Z}$-graded pro-Artinian chain algebras (denoted by  $dg_\mathbb{Z}\hat{\mathcal{C}}_k$) which form a fibrantly generated model category, whose cofibrant objects are cohomological Chavelley-Eilenberg complexes of Lie $\infty$-algebras. Further on, Pridham shows that weak equivalences between cofibrant objects are precisely those maps whose weight-zero part in the grading of the completed symmetric tensor product dualizes to a weak equivalence of chain complexes; equivalently, that weak equivalences between fibrant objects in $\tt DGC$ are $\infty$-quasi-isomorphisms of Lie $\infty$-algebras in the sense of \cite{Valette}. 

The above story has been partially generalized to Lie Rinehart pairs mainly due to the work of G. Vezzosi (\cite{Vez}) and J. Nuiten (\cite{Nui1},\cite{Nui2}). More concretely, it is shown that the category of dg Lie Rinehart pairs $(A,M)$ with a fixed base algebra $A$ (denoted by $\op{LieAlgd_A^{dg}}$) is a semi-model category, and that its homotopy theory is equivalent to that of formal moduli problems over $A$. The equivalence builds on the relation between dg Lie algebras and deformation problems studied in the above cited work of Hinich and Pridham and elsewhere.

Homotopy theory for Lie-Rinehart pairs over a variable basis is implied in the construction of the BV-BRST complex, which can be interpreted (see for example \cite{HT}) as a two-step resolution of a Lie-Rinehart pair $(A,M)$ by a strong homotopy Lie Rinehart pair $(QA,P)$  in the sense of \cite{Vit}, where $QA\to A$ is a Koszul-Tate resolution of $A$, and $P\to M$ a free resolution of $M$ as a $QA$-module. Although different variants of such resolutions of Lie Rinehart pars have been formalized for example in \cite{Kje},  \cite{KLGS}, and \cite{LLG}, to the best of my knowledge, they have not yet been embedded into a solid homotopy theory. 

In the present paper, I define differential pro-graded modules and algebras as pro-object in the categories of graded modules and algebras, together with a differential which satisfies the Leibniz rule in an appropriate sense. Category of pro-nilpotently extended or fat dgca-s (denoted by ${\tt fcdga}(k)$) is defined as the full subcategory of the arrow category of differential pro-graded commutative algebras consisting of maps $A\to A_0$, with $A_0$ a non-positively graded dgca, and $A$ an inverse system of unbounded graded-commutative algebras which are nilpotent extensions of $A_0$. Denote by ${\tt SHRL}_{\op{flat}}(k)$ the full subcategory of strong homotopy Lie-Rinehart pairs $(A,M)$ with $M$ flat as a graded $A$-module.  Chevalley-Eilenberg construction provides an equivalence between  ${\tt SHLR}_{\op{flat}}(k)$, and the opposite to a certain full subcategory of ${\tt fcdga}(k)$.

Generalizing an above-mentioned Pridham's result, I show that  Chevalley-Eilenberg complexes of strong homotopy Lie-Rinehart pairs $(A,M)$, with $A$ a semi-free dgca and $M$ a cell complex in $\op{Mod}(A)$, form a category of cofibrant objects, in which the map dual to a SH morphism $(A,M)\to(B,N)$ is a weak equivalence if its weight-zero components $B\to A$ and $M\to A\otimes_BN$ are both quasi-isomorphisms.
\subsection*{Notation}

Throughout the paper, $k$ is a fixed field of characteristic zero.

To avoid confusion, graded objects  (modules, algebras)  are denoted by typewriter capital letters ($\tt A, B, M, N,\ldots$), whereas differential graded objects are denoted by the italic capital letters ($A, B, M, N,\ldots$).  All the differential graded objects are cohomological. 

Given a non-positively graded graded-commutative $k$-algebra ${\tt A}=\oplus_{n\in\mathbb{Z}_{\leq 0}}A_{n}$, ${\tt Mod}({\tt A})$ denotes the category of $\mathbb{Z}$-graded $k$-vector spaces with an $\tt A$-action in the sense of \cite[VII.4]{MacL}.  

Given $M\in{\tt Mod}({\tt A})$, degree of a homogeneous element $m\in \tt M$ is denoted by $|m|$. The free graded symmetric $\tt A$-algebra of $\tt M$ is 
$$\tt \op{Sym}_{\tt A}M=\oplus_{k\in\mathbb{N}}{\tt M}^{\otimes_{\tt A} k}/{\mathcal{I}},$$ where $\mathcal{I}$ is the ideal generated by $\{m\otimes n- (-1)^{|m|\cdot|n|} n\otimes m:m,n\in\tt M\}$.
Its multiplication (the graded symmetric tensor product) is denoted by $-\odot -$.  The index $k$ is referred to as weight. $\tt A$-module of weight $k$ elements in $\tt \op{Sym}_{\tt A}M$ is denoted by $\op{Sym}_{\tt A}^k\tt M$.

The category of non-positively graded differential  graded-commutative algebras over $k$ is denoted by ${\tt dgca}(k).$ Given $A\in{\tt dgca}(k)$, ${\tt Mod}(A)$ denotes the category of unbounded cochain complexes of $k$-vector spaces with an $A$-action. Explicitly, a cochain complex $({\tt M},d)$ is an $A$-module if $\tt M$ is a graded $\tt A$-module, and the differential satisfies the graded Leibniz identity
\begin{equation}\label{leib1}d(a\cdot m)=d_Aa\cdot m+(-1)^{|a|}a\cdot dm.\end{equation}
Morphisms are required to preserve the $\tt A$-module structure and intertwine the differential.

The group of all the permutations of a set with $k$ elements is denoted by $\Sigma_k$. The set of $(l,m)$-unshuffles is denoted by $\op{Sh}(l,m)\subset \Sigma_{l+m}$. 

Given a $k$-permutation $\sigma\in \Sigma_k$ together with $m_1,\ldots, m_k\in\tt M$, the number $|m_1,\ldots,m_k|_{\sigma}\in \{\pm 1\}$ is implicitly defined by
$$m_{\sigma(1)}\odot\ldots\odot m_{\sigma(k)}=(-1)^{|m_1,\ldots,m_k|_{\sigma}}m_{1}\odot\ldots\odot m_{1}.$$

$\mathbb{N}$ denotes the set of natural numbers including zero. The set of natural numbers without zero is denoted by $\mathbb{N}_{>0}.$

\subsection*{Acknowledgments} I am grateful to J. Nuiten for warning me against pulling back Lie algebroids.


\section{Pro-finite and flat graded modules -- duality}


${\tt M}\in {\tt Mod}({\tt A})$ is called free if it is isomorphic to the tensor product of $\tt A$ with a graded vector space (${\tt M}={\tt A} \otimes_k \mathbb{V})$. It is free finite if the graded vector space $\mathbb{V}$ is finite dimensional in each degree and non-zero in at most finitely many degrees. The full subcategory of free finite $\tt A$-modules (throughout the text referred to as \emph{finite $\tt A$-modules} for simplicity) is denoted by $\tt Mod_{\op{finite}} (A)$.

 ${\tt Mod}({\tt A})$ is a closed symmetric monoidal Abelian category, the compatibility condition being that the tensor product $-\otimes_{\tt A}-$ is additive in both arguments. To prove that the later category is indeed Abelian, notice first that the category ${\tt gVec}(k)$ of $\mathbb{Z}$-graded vector spaces is Abelian (\cite[12.16.2]{Stacks}). By \cite{Ard}, the category ${\tt Mod}({\tt A})$ is Abelian as well, as the monad ${\tt A}\otimes_k -:{\tt gVec}(k)\to {\tt gVec}(k)$ of the free-forgetful adjunction
$${\tt A}\otimes_k -:{\tt gVec}(k)\rightleftarrows  {\tt Mod}({\tt A}): \op{For}$$
is additive and preserves cokernels.


With this, one can define flat $\tt A$ modules:
\begin{defi}
$\tt M \in Mod(A)$ is called flat if the tensor product functor $-\otimes_{\tt A}\tt M:{\tt Mod}({\tt A})\to {\tt Mod}({\tt A})$ is exact.
\end{defi}

Below generalizations of classical results are immediate:

\begin{prop}Every free $\tt A$-module is projective, and every projective $\tt A$-module is flat.
\end{prop}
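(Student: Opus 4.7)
The plan is to mimic the classical proofs from ordinary module theory, leveraging the free-forgetful adjunction ${\tt A}\otimes_k - \dashv \op{For}$ recalled above, together with the fact that over a field every graded vector space is projective (and dually every surjection of graded vector spaces splits degreewise).

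For the first claim, fix a free module ${\tt M}={\tt A}\otimes_k\mathbb{V}$. The adjunction yields a natural isomorphism
\[\op{Hom}_{\tt A}({\tt A}\otimes_k\mathbb{V},{\tt N})\;\cong\;\op{Hom}_{{\tt gVec}(k)}(\mathbb{V},\op{For}({\tt N})).\]
The forgetful functor $\op{For}$ is exact because kernels and cokernels in ${\tt Mod}({\tt A})$ are computed on the underlying graded vector spaces, and every object of ${\tt gVec}(k)$ is projective (since $k$ is a field, every surjection of graded $k$-vector spaces splits degreewise), so $\op{Hom}_{{\tt gVec}(k)}(\mathbb{V},-)$ is exact. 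Composing, $\op{Hom}_{\tt A}({\tt M},-)$ is exact and $\tt M$ is projective.

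For the second claim, I would first verify that every free module is flat. The same adjunction supplies a natural isomorphism ${\tt N}\otimes_{\tt A}({\tt A}\otimes_k\mathbb{V})\cong{\tt N}\otimes_k\mathbb{V}$ in the variable $\tt N$, and tensoring over the field $k$ is exact on ${\tt gVec}(k)$, so $-\otimes_{\tt A}({\tt A}\otimes_k\mathbb{V})$ is exact. Next, any projective ${\tt P}\in{\tt Mod}({\tt A})$ admits a surjection ${\tt F}\twoheadrightarrow{\tt P}$ from a free module, obtained by choosing a homogeneous generating set; projectivity then produces a section realising $\tt P$ as a direct summand of $\tt F$. Since a retract of an exact functor is exact, $-\otimes_{\tt A}{\tt P}$ is a retract of the exact functor $-\otimes_{\tt A}{\tt F}$ and is itself exact, so $\tt P$ is flat.

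The main obstacle is really of a bookkeeping nature rather than a conceptual one: one must confirm that these classical arguments carry over to the $\mathbb{Z}$-graded setting. The delicate points are (i) that the forgetful functor ${\tt Mod}({\tt A})\to{\tt gVec}(k)$ creates kernels, cokernels, and exact sequences, and (ii) that the hom-tensor isomorphism ${\tt N}\otimes_{\tt A}({\tt A}\otimes_k\mathbb{V})\cong{\tt N}\otimes_k\mathbb{V}$ is natural in $\tt N$, so that applying it to a short exact sequence produces a short exact sequence on both sides. Both points follow by direct inspection from the construction of ${\tt Mod}({\tt A})$ via the free-forgetful adjunction, so I do not expect this proposition to present any real difficulty.
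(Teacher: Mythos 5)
Your argument is correct and is exactly the classical reasoning the paper has in mind: the paper offers no proof at all, stating the proposition as an ``immediate'' generalization of classical results, and your two steps (exactness of $\op{Hom}_{\tt A}({\tt A}\otimes_k\mathbb{V},-)$ via the free--forgetful adjunction and exactness of the forgetful functor, then passage from free to projective by the retract argument) are the standard route that justifies that claim in the $\mathbb{Z}$-graded setting over a field. No gaps.
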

\begin{theorem} (Lazard's theorem)  $\tt M\in {\tt Mod}({\tt A})$ is flat if and only if it is the colimit of a directed system of finite $\tt A$-modules.
\end{theorem}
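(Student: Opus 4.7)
The plan is to adapt Lazard's classical proof, keeping careful track of gradings. Recall that here ``finite'' means \emph{free finite}: ${\tt A}\otimes_k{\mathbb V}$ for a graded $k$-vector space $\mathbb V$ of finite total dimension.

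The easy direction ``$\Leftarrow$'' is quick. A finite ${\tt A}$-module ${\tt A}\otimes_k{\mathbb V}$ is flat since $-\otimes_{\tt A}({\tt A}\otimes_k{\mathbb V})\cong (-)\otimes_k{\mathbb V}$ is a finite direct sum of degree shifts of the identity, hence exact. A filtered colimit of flat ${\tt A}$-modules is flat because the forgetful functor ${\tt Mod}({\tt A})\to {\tt gVec}(k)$ is monadic with a colimit-preserving monad (so it creates filtered colimits), and filtered colimits are exact in ${\tt gVec}(k)$; thus $-\otimes_{\tt A}(\op{colim}_i{\tt M}_i)\cong \op{colim}_i(-\otimes_{\tt A}{\tt M}_i)$ is a filtered colimit of exact functors, hence exact.

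For ``$\Rightarrow$'', suppose ${\tt M}$ is flat and let $\cI_{\tt M}$ be the comma category whose objects are ${\tt A}$-linear maps ${\tt F}\to {\tt M}$ with ${\tt F}\in {\tt Mod}_{\op{finite}}({\tt A})$. I would prove that (i) $\cI_{\tt M}$ is filtered, and (ii) the canonical map $\op{colim}_{\cI_{\tt M}}{\tt F}\to {\tt M}$ is an isomorphism; the theorem then follows at once. Both facts reduce to the graded \emph{equational criterion for flatness}: every homogeneous relation $\sum_i a_i m_i=0$ in ${\tt M}$ admits a factorization $m_i=\sum_j b_{ij}m'_j$ with homogeneous $m'_j\in {\tt M}$ and $b_{ij}\in {\tt A}$ of matching degrees, such that $\sum_i a_i b_{ij}=0$ for every $j$. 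This is derived from flatness by considering the short exact sequence $0\to {\tt K}\to {\tt A}^n\to {\tt A}^n/{\tt K}\to 0$, where ${\tt K}$ is the kernel of the map ${\tt A}^n\to {\tt M}$ sending a chosen homogeneous basis of degrees $|e_i|=|m_i|$ to $(m_1,\dots,m_n)$: flatness gives injectivity of ${\tt K}\otimes_{\tt A}{\tt M}\to {\tt A}^n\otimes_{\tt A}{\tt M}$, from which the factorization is extracted exactly as in the ungraded case.

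The main obstacle is purely degree bookkeeping: the auxiliary generators of ${\tt A}^n$ must be chosen homogeneous with $|e_i|=|m_i|$, and all scalars $a_i,b_{ij}$ must be homogeneous of the matching degrees; this is automatic since everything stays within fixed homogeneous components. With the equational criterion secured, the remainder of Lazard's argument transfers verbatim: binary coproducts exist in $\cI_{\tt M}$ via ${\tt F}_1\oplus {\tt F}_2\to {\tt M}$, parallel morphisms in $\cI_{\tt M}$ are coequalized after passing to a common extension supplied by the equational criterion (applied to the finitely many homogeneous generators of the image of $f-g$, which land in the kernel of ${\tt F}_2\to {\tt M}$), hence $\cI_{\tt M}$ is filtered. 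Surjectivity of the canonical map is clear (each homogeneous $m\in {\tt M}$ of degree $n$ lies in the image of the cyclic finite free module on a single generator of degree $n$), and injectivity follows from filteredness together with one more application of the equational criterion.
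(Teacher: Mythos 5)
The paper offers no proof of this theorem at all --- it appears under the heading ``Below generalizations of classical results are immediate'' and is simply asserted --- so there is no argument of the author's to compare yours against. Your proposal is the classical Lazard argument transported to the graded setting, and its architecture is sound: the backward direction via exactness of filtered colimits in ${\tt gVec}(k)$ together with the fact that the forgetful functor creates filtered colimits, and the forward direction via the comma category $\cI_{\tt M}$ of finite free modules over $\tt M$, whose filteredness and whose colimit being $\tt M$ both reduce to a graded equational criterion for flatness. The degree bookkeeping you single out (homogeneous generators of the auxiliary free modules placed in the degrees of the elements they hit, homogeneous coefficients) is indeed the only new ingredient, and it causes no trouble since the paper's ``finite'' modules are exactly the free graded modules on finitely many homogeneous generators.

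One step is garbled as written, though it is easily repaired. In your derivation of the equational criterion you take $\tt K$ to be the kernel of ${\tt A}^n\to{\tt M}$, $e_i\mapsto m_i$ (syzygies of the \emph{elements}), and invoke injectivity of ${\tt K}\otimes_{\tt A}{\tt M}\to{\tt A}^n\otimes_{\tt A}{\tt M}$. That injectivity does hold, but the factorization $m_i=\sum_j b_{ij}m'_j$ with $\sum_i a_i b_{ij}=0$ does not fall out of it. The correct input is the presentation of the graded ideal $I=(a_1,\dots,a_n)\subseteq{\tt A}$: take the free graded module $\tt F$ on homogeneous generators $e_i$ with $|e_i|=|a_i|$, the surjection ${\tt F}\to I$, $e_i\mapsto a_i$, and let $\tt K$ be its kernel (syzygies of the \emph{coefficients}). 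Flatness gives injectivity of $I\otimes_{\tt A}{\tt M}\to{\tt A}\otimes_{\tt A}{\tt M}={\tt M}$, so the relation forces $\sum_i a_i\otimes m_i=0$ already in $I\otimes_{\tt A}{\tt M}$; right-exactness of $-\otimes_{\tt A}{\tt M}$ applied to ${\tt K}\to{\tt F}\to I\to 0$ then exhibits $(m_1,\dots,m_n)\in{\tt F}\otimes_{\tt A}{\tt M}$ as the image of an element $\sum_j k_j\otimes m'_j$ of ${\tt K}\otimes_{\tt A}{\tt M}$, whose components are exactly the $b_{ij}$ and $m'_j$ you need. With that substitution the remainder of your argument --- coproducts in $\cI_{\tt M}$, coequalization of parallel arrows by killing the finitely many homogeneous generators of the image of $f-g$, and surjectivity plus injectivity of the comparison map $\varinjlim_{\cI_{\tt M}}{\tt F}\to{\tt M}$ --- goes through verbatim.
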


It follows that flat $\tt A$-modules are dualizable in a specific sense:

\begin{lemma}\label{grduality} The full subcategory $\tt Mod_{\op{flat}} (A)\subset \tt Mod (A)$ of flat $\tt A$-modules is equivalent to the opposite of the category $\op{Pro}(\tt Mod_{\op{finite}} (A))$ of pro finite $\tt A$-modules.
\end{lemma}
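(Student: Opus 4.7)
The plan is to reduce the claim to the tautological duality $\op{Ind}(\cC)^{\op}\simeq \op{Pro}(\cC^{\op})$ applied to $\cC={\tt Mod}_{\op{finite}}({\tt A})$, and then to identify ${\tt Mod}_{\op{finite}}({\tt A})^{\op}$ with ${\tt Mod}_{\op{finite}}({\tt A})$ itself via graded $k$-linear duality. First, Lazard's theorem (just stated) supplies a colimit functor $\op{colim}:\op{Ind}({\tt Mod}_{\op{finite}}({\tt A}))\to {\tt Mod}_{\op{flat}}({\tt A})$ that is essentially surjective. To upgrade it to an equivalence, I would show that every finite $\tt A$-module ${\tt N}={\tt A}\otimes_k\mathbb{V}$ is a compact object of ${\tt Mod}({\tt A})$. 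By the free--forgetful adjunction this amounts to showing that $\op{Hom}_k(\mathbb{V},-)$ commutes with filtered colimits in ${\tt gVec}(k)$; since $\mathbb{V}$ is finite-dimensional in each degree and non-zero in only finitely many degrees, the set of degree-preserving $k$-linear maps out of $\mathbb{V}$ is a \emph{finite} product of ordinary hom sets, hence does commute with filtered colimits. Standard arguments then yield the equivalence $\op{Ind}({\tt Mod}_{\op{finite}}({\tt A}))\simeq {\tt Mod}_{\op{flat}}({\tt A})$.

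Next I would define a self-duality $D:{\tt Mod}_{\op{finite}}({\tt A})^{\op}\to {\tt Mod}_{\op{finite}}({\tt A})$ by ${\tt A}\otimes_k\mathbb{V}\mapsto {\tt A}\otimes_k\mathbb{V}^{*}$, with $\mathbb{V}^{*}$ the graded $k$-linear dual (itself finite-dimensional in each degree and bounded, hence finite in the sense of the paper); morphisms are sent to their $\tt A$-linear transposes. Since $\mathbb{V}\mapsto\mathbb{V}^{**}$ is canonically naturally isomorphic to the identity on finite-dimensional bounded graded vector spaces, $D\circ D\cong\op{id}$, so $D$ is an equivalence of categories.

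Combining these two ingredients with the tautological equivalence $\op{Ind}(\cC)^{\op}\simeq\op{Pro}(\cC^{\op})$,
\begin{equation*}
{\tt Mod}_{\op{flat}}({\tt A})^{\op}\;\simeq\;\op{Ind}({\tt Mod}_{\op{finite}}({\tt A}))^{\op}\;\simeq\;\op{Pro}({\tt Mod}_{\op{finite}}({\tt A})^{\op})\;\xrightarrow[\;\sim\;]{\op{Pro}(D)}\;\op{Pro}({\tt Mod}_{\op{finite}}({\tt A})),
\end{equation*}
which gives the claimed equivalence. The only genuine content -- and the step most in need of careful verification -- is the compactness of finite modules: the boundedness requirement (``non-zero in at most finitely many degrees'') is essential, for without it a graded vector space nonzero in infinitely many degrees would have hom out of it computed by an infinite product and would thus fail to be compact, breaking the compactness argument altogether.
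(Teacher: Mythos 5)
Your proposal is correct and follows essentially the same route as the paper: Lazard's theorem plus compactness of finite free modules to identify ${\tt Mod}_{\op{flat}}({\tt A})$ with $\op{Ind}({\tt Mod}_{\op{finite}}({\tt A}))$, then the self-duality of ${\tt Mod}_{\op{finite}}({\tt A})$ (your ${\tt A}\otimes_k\mathbb{V}\mapsto{\tt A}\otimes_k\mathbb{V}^*$ agrees with the paper's $\underline{\op{Hom}}_{\tt A}(-,{\tt A})$ on free finite modules) combined with the tautological $\op{Ind}(\mathcal{C})^{\op{op}}\simeq\op{Pro}(\mathcal{C}^{\op{op}})$. The only difference is that you spell out the compactness argument, which the paper merely asserts.
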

\begin{proof}
By the Lazard's theorem, the functor
\begin{equation}\label{equiv}\varinjlim:\op{Ind}(\tt Mod_{\op{finite}} (A))\to Mod_{\op{flat}} (A)\end{equation}
from the category of ind finite $\tt A$-modules to the category of flat $\tt A$ modules, given by mapping a direct system to its colimit, is well-defined and essentially surjective. It is also full and faithful. Namely, for $({\tt M}_\alpha)_\alpha, ({\tt N}_\beta)_\beta\in\op{Ind}(\tt Mod_{\op{finite}} (A))$, 
\begin{equation}\label{indhom}\op{Hom}_{\tt Mod (A)}(\varinjlim_\alpha {\tt M}_\alpha, \varinjlim_\beta {\tt N}_\beta)=\varprojlim_\alpha\op{Hom}_{\tt Mod (A)}( {\tt M}_\alpha, \varinjlim_\beta {\tt N}_\beta)=\varprojlim_\alpha\varinjlim_\beta\op{Hom}_{\tt Mod (A)} ({\tt M}_\alpha,  {\tt N}_\beta),\end{equation}
since free finite $\tt A$-modules are compact objects in $\tt Mod(A).$ 

As the functor
$$\underline{\op{Hom}}_{\tt Mod (A)}(-,{\tt A}):\tt Mod_{\op{finite}} (A)^{\op{op}}\to \tt Mod_{\op{finite}} (A),$$
is an equivalence, it follows that
$$\tt Mod_{\op{flat}} (A)\cong\op{Ind}(\tt Mod_{\op{finite}} (A))\cong \op{Pro}(\tt Mod_{\op{finite}} (A)^{\op{op}})^{\op{op}}\cong \op{Pro}(\tt Mod_{\op{finite}} (A))^{\op{op}}.$$
\end{proof}

\section{Differential pro-graded modules and algebras}
Dualizing differential graded modules requires care.
Consider $A=({\tt A},d_A)\in{\tt dgca}(k)$. $A$-module is called \emph{graded-flat} if the underlying graded $\tt A$-module is flat. The full subcategory of graded-flat $\tt A$-modules is denoted by ${\tt Mod }_{\op{flat}}(A)$.
Given $M=({\tt M},d)\in {\tt Mod}_{\op{flat}}(A)$ and the isomorphism $\tt M\cong \varinjlim_{\alpha}M_{\alpha}$ of Lazard's theorem, the differential on $\tt M$ does not in general restrict to $\tt M_\alpha$-s. One should, rather,  endow the corresponding ind finite $\tt A$-module and its dual pro finite $\tt A$ module with appropriate differentials. As categories of both pro and ind $k$-vector spaces are Abelian (see for example \cite{isa}), it makes sense to speak of chain complexes there. However, it is not a priory clear what should be pro and ind versions of the Leibniz identity (\ref{leib1}). In the sequel, I only detail a notion of differential pro-graded $A$-modules, as the construction of ind-$A$-modules is analogous. To begin with,  recall a characterization of morphisms in pro-categories from the book of Marde\v{s}i\'{c} and Segal \cite{MS}:

Let $\mathcal{C}$ be a category; $A$ and $B$ directed sets;  $(X_\alpha,p_{\alpha,\alpha'})$ and $(Y_\beta,p_{\beta,\beta'})$ inverse systems in $\mathcal{C}$. A morphism of inverse systems $(X_\alpha)\to(Y_\beta)$ consists of a function $\phi:B\to A$ and of morphisms $f_\beta:X_{\phi(\beta)}\to Y_\beta,$ one for each $\beta\in B$, such that for all $\beta,\beta'\in B$, $\beta>\beta'$ there exists $\alpha>\phi(\beta),\phi(\beta')$ for which the diagram
\begin{center}
\begin{tikzcd}
X_{\phi(\beta)}\arrow[d,"f_\beta"]&\arrow[l,swap,"p_{\alpha,\phi(\beta)}"]X_{\alpha}\arrow[r,"p_{\alpha,\phi(\beta')}"]&X_{\phi(\beta')}\arrow[d,"f_{\beta'}"]\\
Y_{\beta}\arrow[rr,"p_{\beta,\beta'}"]&&Y_{\beta'}
\end{tikzcd}
\end{center}
commutes. Morphisms of inverse systems are denoted by $(f_\beta,\phi).$ We say that morphisms of inverse systems $(f_\beta,\phi)$ and $(g_{\beta},\psi)$ are equivalent if for every $\beta\in B$ there exists $\alpha\in A$, $\alpha>\phi(\beta),\psi(\beta)$ such that the diagram
\begin{center}
\begin{tikzcd}
X_{\phi(\beta)}\arrow[dr,"f_\beta"]&\arrow[l,swap,"p_{\alpha,\phi(\beta)}"]X_{\alpha}\arrow[r,"p_{\alpha,\psi(\beta)}"]&X_{\psi(\beta)}\arrow[dl,"g_{\beta}"]\\
&Y_{\beta}&
\end{tikzcd}
\end{center}
commutes. Finally, morphisms in a pro-category are characterized as equivalence classes of morphisms of inverse systems.

Let $f$ be a morphism in $\op{Pro}(C)$ represented by a map of inverse systems $(f_{\beta},\phi)$. Denote for $\alpha>\phi(\beta)$ $f_{\alpha,\beta}=f_{\phi(\beta),\beta}\circ p_{\alpha,\phi(\beta)}$. The family $(f_{\alpha,\beta})_{\beta\in B, \alpha\geq \phi(\beta)}$ will be called a \emph{representing family} of $f$. Observe that $(f_{\alpha,\beta})_{\beta\in B, \alpha\geq \phi(\beta)}$ and $(f'_{\alpha,\beta})_{\beta\in B, \alpha\geq \psi(\beta)}$ represent the same morphism if and only if for any $\beta\in B$ there exists $\alpha\in A$ such that $f_{\alpha,\beta}=f'_{\alpha,\beta}.$ In a slight abuse of terminology, I identify maps with their representing families, writing $f=(f_{\alpha,\beta})$. For $X=(X_\alpha,p_{\alpha,\alpha'})\in \op{Pro}\mathcal{C}$, $\op{id}_X=(p_{\alpha\alpha'})$.

 Coming back to graded $\tt A$-modules, differential is defined by means of a representing family. For this, the Leibniz identity \ref{leib1} is employed relative to underlying maps $p_{\alpha'\alpha}$. Generally, derivations relative to a map are defined as follows:
\begin{defi}Given a differential graded $k$-algebra $A=({\tt A},d_A)$ and a map $f:\tt M\to N$ in $\tt Mod(A)$, a morphism of graded $k$-modules $d_f:\tt M\to N[-1]$ is called a \emph{derivation over $f$} if for all $a\in A$
$$d_f(a\cdot m)=d_A(a)\cdot f(m)+(-1)^{|a|}a\cdot d_f(m).$$ The set of all derivations over $f$ is denoted by $\op{Der}_f(\tt M, N).$ 
\end{defi}
Crucially, derivations over graded $\tt A$-module maps compose with the $\tt A$-module maps. Proof is straight-forward.
\begin{prop}\label{prop2}
Let $f:\tt L\to M$ and $g:\tt M\to  N$ be morphisms of $\tt A$-modules.
\begin{enumerate}
\item Given a derivation $d_f:\tt L\to M[-1]$  over $f$, $g[-1]\circ d_f$ is a derivation over $g\circ f$.
\item Given  a derivation $d_g:\tt M\to N[-1]$ over $g$, $d_g\circ f$ is a derivation over $g\circ f$.
\end{enumerate}
\end{prop}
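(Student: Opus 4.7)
The plan is a direct verification of the Leibniz identity for each composition, using respectively the $\tt A$-linearity of $f$ and $g$ and the derivation-over-$f$ (resp.\ -over-$g$) property. No auxiliary construction is needed; the two parts are symmetric in spirit but use the module-map property on opposite sides of the composition.

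For part (1), I would take $a\in{\tt A}$ and $\ell\in{\tt L}$, and compute
\begin{equation*}
(g[-1]\circ d_f)(a\cdot\ell)=g\bigl(d_A(a)\cdot f(\ell)+(-1)^{|a|}a\cdot d_f(\ell)\bigr),
\end{equation*}
where the first equality uses that $d_f$ is a derivation over $f$. Since $g$ is a morphism of $\tt A$-modules (hence $\tt A$-linear of degree $0$, so $g[-1]$ acts the same way on elements), distributing $g$ and pulling out the $\tt A$-factors yields
\begin{equation*}
d_A(a)\cdot (g\circ f)(\ell)+(-1)^{|a|}a\cdot (g[-1]\circ d_f)(\ell),
\end{equation*}
which is exactly the derivation-over-$(g\circ f)$ property.

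For part (2), the computation is dual. One writes
\begin{equation*}
(d_g\circ f)(a\cdot \ell)=d_g(a\cdot f(\ell))=d_A(a)\cdot g(f(\ell))+(-1)^{|a|}a\cdot d_g(f(\ell)),
\end{equation*}
first using $\tt A$-linearity of $f$, then applying the derivation-over-$g$ identity to $a\cdot f(\ell)\in\tt M$. The right-hand side is immediately $d_A(a)\cdot(g\circ f)(\ell)+(-1)^{|a|}a\cdot(d_g\circ f)(\ell)$, as required.

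There is no substantive obstacle: both checks are one-line unfoldings of the definitions. The only point to watch is bookkeeping of the degree shift $[-1]$ on the targets and, implicitly, the fact that shifting by $[-1]$ does not alter how a degree-zero $\tt A$-module map acts on elements, so that $g[-1]\circ d_f$ really is a derivation of degree $-1$ in the sense of the definition.
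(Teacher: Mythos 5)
Your verification is correct and is exactly the ``straightforward'' argument the paper has in mind (the paper omits the proof, stating only that it is straight-forward): part (1) applies the derivation-over-$f$ identity and then the $\tt A$-linearity of $g$, and part (2) applies the $\tt A$-linearity of $f$ and then the derivation-over-$g$ identity. Your remark about the degree shift $[-1]$ not affecting how a degree-zero module map acts on elements is the right bookkeeping point and completes the check.
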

Finally, derivations of a pro-graded $A$-module are defined as follows:
\begin{defi}Given a differential graded $k$-algebra $A=({\tt A},d_A)\in {\tt dgca}(k)$ and a graded $\tt A$-module $\tt M$, derivations of $M$ are the following endomorphisms of pro-graded $k$-vector spaces:
$$\op{Der}({\tt M})=\varprojlim_{\beta}\varinjlim_{\alpha\geq \beta}\op{Der}_{p_{\alpha\beta}} ({\tt M}_\alpha,  {\tt M}_\beta)\subset \varprojlim_{\beta}\varinjlim_{\alpha\geq \beta}\op{Hom}_{k} ({\tt M}_\alpha,  {\tt M}_\beta[-1])=\op{Hom}_{\op{Pro}({\tt gMod}(k))}(\tt M, M[-1]).$$
More generally, given a map of pro-graded $\tt A$-modules $f=(f_{\alpha,\beta}):\tt M\to N$ \emph{derivation over} $f$ is a map in
$$\op{Der}_f({\tt M, N})=\varprojlim_{\beta}\varinjlim_{\alpha}\op{Der}_{f_{\alpha\beta}} ({\tt M}_\alpha,  {\tt N}_\beta).$$
\end{defi}
It is readily verified that the notion of a derivation over $f$ is independent of the choice of a representing family $(f_{\alpha,\beta})$.
Unsurprisingly, differential pro-graded $A$ modules are defined as pro-graded $\tt A$ modules together with a square-zero derivation:

\begin{defi}\label{dva}
Given a differential graded $k$-algebra $A=({\tt A},d_A)$, a \emph{pro-$A$-module} $M=({\tt M},d)$ is a pair  of a pro-graded $\tt A$-module $M$ and a square-zero derivation $d\in\op{Der}(\tt M)$. Morphisms of pro-$A$-modules are morphisms of underlying pro-graded $\tt A$-modules which intertwine the differential. The category of pro-$A$-modules is denoted by ${\tt ProMod}(A)$. Its full subcategory whose objects are pairs $({\tt M},d)$ such that $\tt M$ is pro finite is called the category of \emph{pro finite $A$-modules}, and denoted by ${\tt ProMod}_{\op{finite}}(A)$. Categories of \emph{ind-$A$-modules} and \emph{ind finite $A$-modules} are defined analogously, and denoted respectively by ${\tt IndMod}(A)$ and ${\tt IndMod}_{\op{finite}}(A)$.
\end{defi} 
Both the square-zero property of the differential, and the intertwining property of morphisms have concrete characterizations on the level of representing families:
\begin{prop}\label{concrete}
\begin{enumerate}
\item
Given a pro-graded $\tt A$-module ${\tt M}=({\tt M}_{\alpha})_{\alpha\in A}$, a derivation $d=(d_{\alpha,\beta})\in\op{Der}(\tt M)$, squares to zero if and only if for every $\beta\in A$ there exist $\alpha, \alpha'\in A$ such that $d_{\alpha',\beta}\circ d_{\alpha,\alpha'}=0$.
\item Given $M=(({\tt M}_{\alpha})_{\alpha\in A},d^M), N=(({\tt N}_{\beta})_{\beta\in B},d^N)\in{\tt ProMod}(A)$, a map $f=(f_{\alpha,\beta}):\tt M\to N$ intertwines the differentials if and only if for every $\beta\in B$, there exist $\beta'\in B$, $\alpha,\alpha'\in A$ such that $ f_{\alpha',\beta}\circ d^M_{\alpha,\alpha'}=d^N_{\beta',\beta}\circ f_{\alpha,\beta'}$.
\end{enumerate}
\end{prop}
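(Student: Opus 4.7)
The plan is to reduce both statements to the characterization, recalled just before the proposition, that two representing families $(h_{\alpha,\beta})$ and $(h'_{\alpha,\beta})$ define the same morphism in $\op{Pro}({\tt gMod}(k))$ iff for every target index $\beta$ there exists a source index $\alpha$ at which $h_{\alpha,\beta}=h'_{\alpha,\beta}$. Both items then become instances of this criterion applied to a composition of two pro-morphisms.

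The key preliminary observation is the composition rule at the level of representing families. Unpacking the index functions $\phi:A\to A$ for $d$ and $\psi:B\to A$ for $f$, and inserting the structure maps $p_{\alpha,\alpha'}$, $p_{\beta,\beta'}$ where needed, one finds that $d\circ d$ is represented by the family $(d_{\alpha',\beta}\circ d_{\alpha,\alpha'})$, and that the composites $f\circ d^M$ and $d^N\circ f$ are represented respectively by $(f_{\alpha',\beta}\circ d^M_{\alpha,\alpha'})$ and $(d^N_{\beta',\beta}\circ f_{\alpha,\beta'})$, with intermediate indices $\alpha'\in A$ and $\beta'\in B$ chosen cofinally above the relevant images of $\phi$ and $\psi$ so that all compositions are defined. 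This is a routine diagram chase using the defining compatibility of the $d_\beta$ and $f_\beta$.

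For part (1), the zero morphism in $\op{Hom}_{\op{Pro}({\tt gMod}(k))}({\tt M},{\tt M}[-1])$ is represented by the all-zero family; applying the equality criterion to this together with the representation of $d^2$ above yields exactly the asserted condition, that for every $\beta\in A$ there exist $\alpha,\alpha'\in A$ with $d_{\alpha',\beta}\circ d_{\alpha,\alpha'}=0$. For part (2), the criterion applied to the two representing families of $f\circ d^M$ and $d^N\circ f$, viewed as elements of $\op{Hom}_{\op{Pro}({\tt gMod}(k))}({\tt M},{\tt N}[-1])$, yields the condition of the proposition; the three quantifiers $\beta'\in B$, $\alpha,\alpha'\in A$ reflect the independent choices of intermediate indices in the two composites.

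The only technical content is bookkeeping with the cofinality and compatibility requirements on the index functions, and I do not expect any essential obstacle beyond that.
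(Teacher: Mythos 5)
Your proposal is correct and follows essentially the same route as the paper: the paper's proof is a one-line appeal to the concrete realization of diagrams in pro-categories (Isaksen, Theorem 3.3), which is precisely the representing-family bookkeeping you carry out by hand using the Marde\v{s}i\'{c}--Segal criterion recalled earlier in the section. Your version is merely more self-contained, spelling out the composition rule for representing families and the equality criterion that the paper outsources to the citation.
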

\begin{proof}
Both statements follow immediately from the concrete realizations of diagrams in pro-categories, elaborated in the proof of \cite[Theorem 3.3.]{isa}.
\end{proof}

Duality of Lemma \ref{grduality} extends to the dg world:

\begin{prop}\label{dgduality} The full subcategory ${\tt Mod_{\op{flat}}} (A)\subset {\tt Mod }(A)$ of graded-flat $A$-modules is equivalent to the opposite of the category ${\tt ProMod}_{\op{finite}}(A)$ of pro finite $A$-modules. Equivalence and its inverse are both denoted by $(-)^*.$
\end{prop}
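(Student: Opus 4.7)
The plan is to lift the graded equivalence of Lemma \ref{grduality} to the differential-graded setting by showing that the dualization functor $(-)^*$ transports $A$-linear differentials on a flat graded $A$-module to square-zero derivations, in the sense of Definition \ref{dva}, on the dual pro-finite graded ${\tt A}$-module, and conversely. Given $(M, d_M) \in {\tt Mod}_{\op{flat}}(A)$, Lazard's theorem writes ${\tt M} \cong \varinjlim_\alpha {\tt M}_\alpha$ as a filtered colimit of finite ${\tt A}$-modules with structure maps $\iota_{\alpha\alpha'}: {\tt M}_\alpha \to {\tt M}_{\alpha'}$, and Lemma \ref{grduality} identifies the graded dual with the pro-system ${\tt M}^* = ({\tt M}_\alpha^*, \iota_{\alpha\alpha'}^*)$.

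To produce the dual derivation, I exploit the compactness of each ${\tt M}_\alpha$. The composite $d_M \circ \iota_\alpha: {\tt M}_\alpha \to {\tt M}[-1]$ is a derivation over $\iota_\alpha$ by Proposition \ref{prop2}, and because ${\tt M}_\alpha$ is free finite the derivation is determined by its restriction to a graded $k$-basis; that restriction factors through some ${\tt M}_{\alpha'}[-1]$ by compactness in ${\tt gVec}(k)$. Extending by the Leibniz rule yields a derivation $d_{\alpha\alpha'}: {\tt M}_\alpha \to {\tt M}_{\alpha'}[-1]$ over $\iota_{\alpha\alpha'}$ satisfying $d_M \circ \iota_\alpha = \iota_{\alpha'}[-1] \circ d_{\alpha\alpha'}$. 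Its ${\tt A}$-linear dual $d_{\alpha\alpha'}^*: {\tt M}_{\alpha'}^* \to {\tt M}_\alpha^*[-1]$ is then a derivation over $\iota_{\alpha\alpha'}^*$, and the family $(d_{\alpha\alpha'}^*)$ represents an element of $\op{Der}({\tt M}^*)$ whose equivalence class is independent of the choice of factorization, since any two agree after further composition with some $\iota_{\alpha'\alpha''}$. Proposition \ref{concrete}(1) then relates square-zero of the representing family to square-zero of $d_M$: iterating the compactness argument, $d_M^2 \circ \iota_\alpha = 0$ yields indices $\alpha''' \geq \alpha'' \geq \alpha' \geq \alpha$ with $\iota_{\alpha''\alpha'''}[-2] \circ d_{\alpha'\alpha''}[-1] \circ d_{\alpha\alpha'} = 0$, which dualizes to the required finite-level vanishing.

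Functoriality on morphisms proceeds analogously: a morphism $f: (M, d_M) \to (N, d_N)$ dualizes on the underlying graded modules via Lemma \ref{grduality}, and the intertwining identity $f \circ d_M = d_N \circ f$ translates through compactness into finite-level identities whose duals give the concrete intertwining condition of Proposition \ref{concrete}(2). An inverse functor is constructed symmetrically: given a pro-finite $A$-module $(N, d_N)$ with representing family $(d^N_{\beta\beta'})$, the graded inverse equivalence of Lemma \ref{grduality} yields the underlying ind-finite system, and the dualized $(d^N_{\beta\beta'})^*$ assemble into an $A$-linear square-zero differential on the colimit. That the two constructions are mutually inverse reduces to the corresponding statement for the graded equivalence.

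The main obstacle is the bookkeeping: the dual derivation is specified only through a representing family after several auxiliary choices of indices (for the successive factorizations through finite subobjects), and its Leibniz property, square-zero condition, and compatibility with morphisms must each be verified modulo the equivalence relation on representing families set up in Section 3. Proposition \ref{concrete} reduces these verifications to finite-level identities, after which the argument is a tedious but routine induction on the diagrams of compactness factorizations.
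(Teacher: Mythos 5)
Your strategy coincides with the paper's: both reduce to Lemma \ref{grduality} for the underlying graded modules and then identify derivations over a morphism of colimits with compatible families of derivations between finite stages. The paper defers that identification to the isomorphism (\ref{ref}) proved inside Theorem \ref{Theo1}, and the argument given there is exactly your compactness argument: the finitely many generators of ${\tt M}_\alpha$ land in the image of some finite stage, and the extension by the Leibniz rule is unique by Proposition \ref{prop}. Your treatment of the square-zero condition and of morphisms via Proposition \ref{concrete}, and the symmetric construction of the inverse, also match.

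There is, however, one step that fails as literally written: ``Its ${\tt A}$-linear dual $d_{\alpha\alpha'}^*:{\tt M}_{\alpha'}^*\to{\tt M}_\alpha^*[-1]$ is then a derivation over $\iota_{\alpha\alpha'}^*$.'' A derivation over $\iota_{\alpha\alpha'}$ is not an ${\tt A}$-module map, so it has no dual under $\underline{\op{Hom}}_{\tt A}(-,{\tt A})$, and the naive transpose $\phi\mapsto -(-1)^{|\phi|}\phi\circ d_{\alpha\alpha'}$ does not satisfy the Leibniz rule over $\iota_{\alpha\alpha'}^*$ (pre-composing an ${\tt A}$-linear $\phi$ with a derivation produces the extra $d_A(a)$-term, which must be absorbed). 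The correct assignment is
$$\phi\;\longmapsto\; d_A\circ\phi\circ \iota_{\alpha\alpha'}\;-\;(-1)^{|\phi|}\,\phi\circ d_{\alpha\alpha'},$$
which is precisely the bijection $\op{Der}_{f_{\alpha\beta}}({\tt M}_\alpha,{\tt N}_\beta)\cong \op{Der}_{f_{\alpha\beta}^*}({\tt N}_\beta^*,{\tt M}_\alpha^*)$ that the paper writes out explicitly. This formula is the only genuine computation in the proof --- it is also what makes your subsequent square-zero and intertwining checks dualize correctly --- so it cannot be left implicit. With that formula inserted, the rest of your outline goes through as in the paper.
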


\begin{proof}
Let $M$ and $N$ be graded-flat $A$-modules, and let $f:M\to N$ be a morphism.
By Lazard's theorem, underlying graded $\tt A$-modules are colimits of directed systems of finite $\tt A$-modules $\tt M=\varinjlim_{\alpha} M_\alpha$, and $\tt N=\varinjlim_{\beta} N_\beta$. Let $(f_{\alpha,\beta}):(M_\alpha)\to( N_\beta)$ be the corresponding map of ind-finite $\tt A$-modules, in view of equivalence \ref{equiv}. Assume for now that
$$\op{Der}_f(\varinjlim_\alpha {\tt M}_\alpha, \varinjlim_\beta {\tt N}_\beta)\cong\varprojlim_{\alpha}\varinjlim_\beta\op{Der}_{f_{\alpha\beta}} ({\tt M}_\alpha,  {\tt N}_\beta).$$
The statement is proven later in the text. In fact, it is a special case of the isomorphism \ref{ref} for $l=1$.

Recall that dual of a finite $\tt A$-module $\tt M_{\alpha}$ is defined as the module of graded homomorphisms of $\tt A$-modules $\underline{\op{Hom}}_{\tt A}(\tt M_{\alpha},A).$ Explicitly, degree $k$ elements of $\underline{\op{Hom}}_{\tt A}(\tt M_{\alpha},A)$ are maps $f:{\tt M_{\alpha}}[k]\to \tt A$  of graded vector spaces such that $f(a\cdot x)=(-1)^{k|a|}a\cdot f(x)$, and the $\tt A$ module structure  is given by $(a\cdot f)(x)=a\cdot f(x)$. Functor $\underline{\op{Hom}}_{\tt A}(\tt -,A)$ is a contravariant self-equivalence of ${\tt Mod}_{\op{finite}}(\tt A)$, denoted by $(-)^*$.
Derivations between finite $\tt A$ modules also dualize. More precisely, there is a bijection
$$\op{Der}_{f_{\alpha\beta}} ({\tt M}_\alpha,  {\tt N}_\beta)\cong\op{Der}_{f_{\alpha\beta}^*} ({\tt N}_\beta^*,{\tt M}_\alpha^*)$$
which assigns to $X\in \op{Der}_{f_{\alpha\beta}} ({\tt M}_\alpha,  {\tt N}_\beta)$ a derivation given by
$$\phi\mapsto d_A\circ \phi\circ f_{\alpha\beta}-(-1)^{|\phi|}\phi\circ X.$$

Hence,
\begin{equation}\label{bitna}\op{Der}_f( {\tt M},{\tt N})\cong\varprojlim_{\alpha}\varinjlim_\beta\op{Der}_{f_{\alpha\beta}^*} ({\tt N}_\beta^*,{\tt M}_\alpha^*).\end{equation}

Setting $f:\tt M\to M$ to the identity, one concludes that differentials on $\tt M$ are in 1-1 correspondence with the differentials of $\tt M^*$, establishing the equivalence on objects (for the square-zero property, see Proposition \ref{concrete}). Given $M, N\in {\tt Mod}(A)$ and an $\tt A$-module morphism $f:\tt M\to N$,  derivations $d_N\circ f,f\circ d_M\in \op{Der}_{f}(\tt M, N)$ are equal if and only if the dual derivations $\varprojlim_{\alpha}\varinjlim_\beta\op{Der}_{f_{\alpha\beta}^*} ({\tt N}_\beta^*,  {\tt M}_\alpha^*)$ are equal. This establishes the equivalence on morphisms.
\end{proof}

I conclude the section with a rather obvious observation, which turns out to be quite useful:

\begin{prop}\label{adj}Given $A, B\in{\tt dgca}(k)$, and a morphism $f: A\to B$, the forgetful functor $\op{Pro}({\tt Mod}(B))\to \op{Pro}({\tt Mod}(A))$ has a left adjoint base-change functor 
$$B\otimes_A -:\op{Pro}({\tt Mod}(A))\to \op{Pro}({\tt Mod}(B)),\hspace{5pt}(M_{\alpha})\mapsto (B\otimes_A M_{\alpha}).$$
\end{prop}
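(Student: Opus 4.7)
The statement says that the ordinary tensor--forget adjunction $B\otimes_A - \dashv \op{For}$ between ${\tt Mod}(A)$ and ${\tt Mod}(B)$ lifts level-wise to the pro-categories, so the plan is to define the two functors level-wise and then deduce the adjunction from the classical one by applying the pro-Hom formula from Marde\v{s}i\'{c}--Segal quoted earlier.

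First I would define $B\otimes_A-$ on objects by tensoring each level: given $M=(M_\alpha, p_{\alpha\alpha'})\in\op{Pro}({\tt Mod}(A))$, set $(B\otimes_A M)_\alpha := B\otimes_A M_\alpha$ with bonding maps $\id_B\otimes p_{\alpha\alpha'}$. On morphisms, take a representing family $(f_{\alpha,\beta})$ of a map $f:M\to N$ and apply $B\otimes_A -$ level-wise to obtain the family $(\id_B\otimes f_{\alpha,\beta})$. Functoriality and independence of the choice of representing family are automatic, since $B\otimes_A -:{\tt Mod}(A)\to{\tt Mod}(B)$ is an ordinary functor, so every commuting square and every equivalence triangle from the description of morphisms in pro-categories is preserved under $B\otimes_A -$. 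The forgetful functor $\op{For}$ is defined analogously by restricting scalars at each level.

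To verify the adjunction, for $M\in\op{Pro}({\tt Mod}(A))$ and $N=(N_\beta)\in\op{Pro}({\tt Mod}(B))$ I would use the pro-Hom formula:
\begin{align*}
\op{Hom}_{\op{Pro}({\tt Mod}(B))}(B\otimes_A M,\,N)
&=\varprojlim_{\beta}\varinjlim_{\alpha}\op{Hom}_{{\tt Mod}(B)}(B\otimes_A M_\alpha,\,N_\beta)\\
&\cong \varprojlim_{\beta}\varinjlim_{\alpha}\op{Hom}_{{\tt Mod}(A)}(M_\alpha,\,\op{For}(N_\beta))\\
&=\op{Hom}_{\op{Pro}({\tt Mod}(A))}(M,\,\op{For}(N)).
\end{align*}
The middle step is the classical tensor--forget adjunction applied separately at each pair $(\alpha,\beta)$. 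Naturality of this bijection in both arguments is exactly what is needed to commute with the operations $\varinjlim_{\alpha}$ and $\varprojlim_{\beta}$, and together with naturality in $M$ and $N$ it gives the required adjunction isomorphism.

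There is no substantial obstacle here; the statement is a direct instance of the general principle that a level-wise adjunction between underlying categories lifts to their pro-categories. The only thing to track carefully is the bookkeeping of index functions and the equivalence relation on representing families, but this is purely formal since both $B\otimes_A -$ and $\op{For}$ act strictly at each level.
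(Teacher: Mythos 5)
Your argument is correct: defining both functors level-wise and transporting the classical adjunction $B\otimes_A-\dashv\op{For}$ through the pro-Hom formula $\op{Hom}_{\op{Pro}(\mathcal{C})}((X_\alpha),(Y_\beta))=\varprojlim_\beta\varinjlim_\alpha\op{Hom}_{\mathcal{C}}(X_\alpha,Y_\beta)$ is the standard way to lift a level-wise adjunction to pro-categories, with naturality of the unit/counit doing all the work. The paper in fact gives no proof at all --- it introduces the proposition as ``a rather obvious observation'' --- so your write-up simply supplies the routine verification the author left implicit, and it does so correctly.
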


Denote by ${\tt gca}_{\mathbb{Z}}(k)$ the category of $\mathbb{Z}$-graded commutative $k$-algebras. As with differential pro-graded modules, the correct notion of a differential pro-graded algebra involves a Leibniz rule relative to a morphism of graded algebras:
\begin{defi}\label{der}
Given ${\tt A,B}\in {\tt gca}_{\mathbb{Z}}(k)$ and a morphism $f:\tt A\to B$, we say that a graded $k$-linear map $d_f:\tt A\to B[-1]$ is a \emph{derivation over $f$}
if for all $x,y\in \tt A$
$$d_f(x\cdot y)=d_f(x)\cdot f(y) + (-1)^{|x|}f(x)\cdot d_f(y).$$
The set of all derivations over $f$ is denoted by $\op{Der}_f(\tt A, B).$ 
\end{defi}
The analogous statement to the Proposition \ref{prop2} holds in this setting, leading to the meaningful notion of a differential pro-graded commutative algebra:
\begin{defi}Given a pro-graded $k$-algebra ${\tt A}=({\tt A}_\alpha)\in \op{Pro}({\tt gca}_{\mathbb{Z}}(k))$, derivations of $\tt A$ are the following endomorphisms of pro-graded $k$-vector spaces:
$$\op{Der}(\tt M)=\varprojlim_{\beta}\varinjlim_{\alpha\geq \beta}\op{Der}_{\pi_{\alpha\beta}} ({\tt A}_\alpha,  {\tt A}_\beta)\subset \varprojlim_{\beta}\varinjlim_{\alpha\geq \beta}\op{Hom}_{k} ({\tt A}_\alpha,  {\tt A}_\beta[-1])=\op{Hom}_{\op{Pro}({\tt gMod}(k))}(\tt A, A[-1]).$$
More generally, given a map of pro-graded $\tt A$-algebras $f=(f_{\alpha,\beta}):\tt A\to B$, \emph{derivation over} $f$ is a map in
$$\op{Der}_f({\tt A, B})=\varprojlim_{\beta}\varinjlim_{\alpha}\op{Der}_{f_{\alpha\beta}} ({\tt A}_\alpha,  {\tt B}_\beta).$$
\end{defi}
Once again, it is readily verified that the notion of a derivation over $f$ is independent of the choice of a representing family $(f_{\alpha,\beta})$.
Finally, differential pro-graded $A$ algebras are defined as pro-graded $\tt A$ algebras together with a square-zero derivation:
\begin{defi}
\emph{Differential pro-graded commutative algebra} is a pair $({\tt A},d_A)$, of a pro-graded $k$-algebra and a square-zero derivation $d\in\op{Der}(\tt A)$. Morphisms of differential pro-graded commutative algebras are morphisms of underlying pro-graded algebras which intertwine the differential.
\end{defi}
Square-zero and intertwining properties are characterized as follows:
\begin{prop}\label{cocnrete2}
\begin{enumerate}
\item
Given a pro-graded $k$-algebra ${\tt A}=({\tt A}_{\alpha})_{\alpha\in A}$, a derivation $d=(d_{\alpha,\beta})\in\op{Der}(\tt A)$, squares to zero if and only if for every $\beta\in A$ there exist $\alpha, \alpha'\in A$ such that $d_{\alpha',\beta}\circ d_{\alpha,\alpha'}=0$.
\item Given $A=(({\tt A}_{\alpha})_{\alpha\in A},d^A),$ $B=(({\tt B}_{\beta})_{\beta\in B},d^B)\in{\tt ProMod}(A)$, a map $f=(f_{\alpha,\beta}):\tt A\to B$ intertwines the differentials if and only if for every $\beta\in B$, there exist $\beta'\in B$, $\alpha,\alpha'\in A$ such that $ f_{\alpha',\beta}\circ d^A_{\alpha,\alpha'}=d^B_{\beta',\beta}\circ f_{\alpha,\beta'}$.
\end{enumerate}
\end{prop}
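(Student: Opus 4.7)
The plan is to reduce both statements to the standard criteria in pro-categories for when a morphism is zero and when two morphisms coincide, exactly as done for modules in Proposition \ref{concrete}. Since \emph{squares to zero} and \emph{intertwines the differential} are conditions on the values of certain maps in $\op{Hom}_{\op{Pro}({\tt gMod}(k))}$, nothing algebra-specific enters: once $d$ and $f$ are seen as maps in the pro-category of graded $k$-vector spaces, the Leibniz/derivation condition is irrelevant to the assertion, and one simply tracks representing families through composition.

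For part (1), I first note that $d\circ d$ is the composition in $\op{Hom}_{\op{Pro}({\tt gMod}(k))}(\tt A, A[-2])$ of $d$ with itself. Following the convention fixed earlier in the section, if $d$ is represented by $(d_{\alpha,\beta})_{\beta\in A,\,\alpha\geq\beta}$, then for each $\beta\in A$ the composite is represented (at level $\beta$) by $d_{\alpha',\beta}\circ d_{\alpha,\alpha'}$ for any choices $\alpha'\geq\beta$ and $\alpha\geq\alpha'$. By the realization of morphisms in pro-categories recalled in \cite[Theorem 3.3.]{isa}, a morphism is the zero morphism if and only if its representing family contains, at each level $\beta$, a zero map. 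This is exactly the stated condition $d_{\alpha',\beta}\circ d_{\alpha,\alpha'}=0$.

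For part (2), the two compositions $f\circ d^A$ and $d^B\circ f$ define morphisms in $\op{Hom}_{\op{Pro}({\tt gMod}(k))}({\tt A},{\tt B}[-1])$. Using the composition rule in the pro-category, a representing family of $f\circ d^A$ at level $\beta\in B$ has the form $f_{\alpha',\beta}\circ d^A_{\alpha,\alpha'}$, while a representing family of $d^B\circ f$ at the same level has the form $d^B_{\beta',\beta}\circ f_{\alpha,\beta'}$. Again by \cite[Theorem 3.3.]{isa}, two morphisms in $\op{Pro}({\tt gMod}(k))$ coincide if and only if for each target index $\beta$ their representing families agree after passing to a sufficiently large common source; this is precisely the existence of $\beta'\in B$ and $\alpha,\alpha'\in A$ with $f_{\alpha',\beta}\circ d^A_{\alpha,\alpha'}=d^B_{\beta',\beta}\circ f_{\alpha,\beta'}$.

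The only real point of care (and the main place where I would slow down) is making sure that the independence of the derivation-over-$f$ condition from the choice of representing family, already asserted earlier in the paper, lets us freely replace $(f_{\alpha,\beta})$ and $(d_{\alpha,\beta})$ by equivalent families during the composition bookkeeping; otherwise the quantifiers ``there exist $\alpha,\alpha',\beta'$'' could depend on unfortunate choices. Once that is checked, the two statements are direct transcriptions of the zero-morphism and equality-of-morphisms criteria in pro-categories, and no further input is needed — in particular the graded-commutative algebra structure plays no role, so the proof is verbatim the same as for Proposition \ref{concrete}.
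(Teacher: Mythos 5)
Your proposal is correct and matches the paper's approach: the paper proves the module analogue (Proposition \ref{concrete}) by exactly this appeal to the concrete realization of morphisms and compositions in pro-categories from \cite[Theorem 3.3.]{isa}, and leaves the algebra case to the identical argument, since the zero-morphism and equality-of-morphisms criteria are purely about representing families in $\op{Pro}({\tt gMod}(k))$ and do not involve the multiplicative structure.
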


\section{Pro-nilpotently extended cdga-s}

My reason to introduce differential pro-graded algebras is to deal with Chevalley-Eilenberg complexes of SH Lie Rinehart pairs. Let $(A,L)$ be Lie Rinehart pair, where $A$ is a commutative algebra over $k$, and $L$ a finitely generated projective $A$-module. Its Chevalley-Eilenberg complex is commonly interpreted as a dg algebra $(\op{Sym}_A(L^*[1]),d)$ over $A$. In the cohomological degree zero, the differential yields a derivation on $A$ valued in $L^*$ which is dual to the anchor, and in degree one a map $L^*\to L^*\wedge L^*$ dual to the bracket. Morphisms of such Lie-Rinehart pars are dual to maps of Chevalley-Eilenberg complexes $f:(\op{Sym}_A(L^*[1]),d)\to(\op{Sym}_B(M^*[1]),d)$, which themselves fit in a commutative square of dgca-s
\begin{center}
\begin{tikzcd}
(\op{Sym}_A(L^*[1]),d)\arrow[r,"f"]\arrow[d]&\arrow[d](\op{Sym}_B(M^*[1]),d)\\
A\arrow[r,"f_0"]&B.
\end{tikzcd}
\end{center}
First generalization is to allow for $L$ to be a perfect differential graded $A$-module concentrated in  non-positive degrees.  Any differential on $\op{Sym}_A(L^*[1])$ extending that on $L^*$ endows $L$ with the algebraic counterpart of Lie $n$-algebroid structure (\cite{PB}, \cite{PS},$\ldots$), which could be called Lie Rinehart $n$-pair. Morphisms of "Lie Rinehart $n$-pairs" are again encoded by the morphisms of dg algebras and they fit in the same commutative diagram.

If one either drops the non-positivity assumption on $L$, or allows $A$ to be a differential graded algebra (example of the later is the BV complex), or both, two new features appear: 
\begin{itemize}
\item
Firstly, $L$ can be endowed with a proper strong homotopy Lie Rinehart structure (\cite{Vit},$\ldots$), with infinite sequence of higher brackets and anchors. To encode this feature within the Chevalley-Eilenberg complex, one needs to consider differentials on the completion of the graded algebra $\op{Sym}_A(L^*[1])$.
\item
Secondly, not every differential on the completion will be intertwined by the projection $\widehat{\op{Sym}}_A(L^*[1])\to A$. In fact, only those differentials which satisfy the later property encode Chevalley-Eilenberg complexes of strong homotopy Lie-Rinehart pairs.
\end{itemize}

As formal power series are pro-nilpotent extensions of the base algebra, the appropriate category to  study Chevalley-Eilenberg complexes of SH Lie Rinehart pairs is that of pro-nilpotent extensions of differential graded algebras. This motivates the following definition:

\begin{defi}\emph{A pro-nilpotently extended dgca (fat cdga for short)} is a pair $(A,A_0)$, with $A_0\in {\tt dgca}(k)$, and $A$ a differential pro-graded commutative algebra, together with a morphism $\pi_A:A\to A_0$ of differential pro-graded commutative algebras represented by a compatible family of surjections $({\pi_A}_\alpha:A_\alpha\to A_0)$ with nilpotent kernels. A morphism of pro-nilpotently extended (fat) dgca-s $(A,A_0)$ and $(B,B_0)$ is a pair $(f,f_0)$, where $f:A\to B$ is a morphisms of differential pro-graded commutative algebras, and $f_0:A_0\to B_0$ is a morphism of dg algebras,  such that $\pi_B\circ f=f_0\circ \pi_A$. The category of fat dgca-s is denoted by ${\tt fcdga}(k)$.
\end{defi}

\section{Strong homotopy Lie Rinehart pairs and their Chevalley-Eilenberg complexes}

Duality between Lie algebroids and their Chevalley-Eilenberg complexes was first observed in the celebrated work of Vaintrob \cite{Vai}, and has seen numerous generalizations since, in particular to Lie $n$-algebroids (\cite{PB}, \cite{PS},...), and to SH Lie Rinehart pairs (\cite{Hueb},\cite{Vit}...). Later objects generalize dg Lie algebroids of Vezzosi and Nuiten (\cite{Vez}, \cite{Nui1}) by allowing both higher brackets and anchors. In all the above references the duality is either constructed in the smooth setting, or in the algebraic setting by restricting to "Lie algebroids" which are finitely generated and projective over the base (dg) algebra. Working with pro-objects, I relax the later finiteness condition asking only that the module underlying a SH Lie Rinehart pair be graded-flat over its base dg algebra. Conceptually, there is little novel in this section. It rather serves to show that, reasoning in line with the preceding section, generalization of the duality, as constructed in \cite{Vit}, is straightforward.

In accordance with the philosophy of "derivatives relative to an $\tt A$-module homomorphism", I introduce the relative version of Vitagliano's multiderivatives:

\begin{defi}\label{multider}
Let $l\in\mathbb{N}$. Given $\tt M, N\in {\tt Mod}(A)$, and an $\tt A$-module homomorphism $f:\tt M\to N$, a \emph{multiderivation over $f$ of weight $l$} is a pair $\mathbb{X}=(X,\sigma_X)$, where $\sigma_X$ is  a  map of $\tt A$-modules $\op{Sym}_{\tt A}^{l}({\tt M})\to \op{Der}({\tt A})[-1]$ (for $l=0$ equivalently a dergree 1 element in $ \op{Der}({\tt A})$), and $X$ is a map of graded vector spaces $\op{Sym}_k^{l+1}({\tt M})\to {\tt N}[-1]$ such that
$$X(m_1\odot\ldots\odot m_{l}\odot a\cdot m_{l+1})=\sigma_X (m_1\odot\ldots\odot m_{l})(a)\cdot f(m_{l+1})+(-1)^{|a|(1+\sum_{i=1}^l |m_i|)} a\cdot X(m_1\odot\ldots\odot m_{l+1}).$$
In the sequel, I refer to $X$ as multiderivation, and to $\sigma_X$ as its anchor. The set of multiderivations over $f$ of weight $l$ is denoted by $\op{mDer}_{f}^l(\tt M, N)$. The set of (inhomogeneous) multiderivations over $f$ is $\op{mDer}_{f}({\tt M, N}):=\prod_{l\in\mathbb{N}}\op{mDer}_{f}^l(\tt M, N)$.
\end{defi}

\begin{ex}\label{ex}
Let $f:\tt L \to M$, and  $g:\tt M\to  N$ be $\tt A$-module homomorphisms. If $(X,\sigma_X)$ is a multiderivation over $f$, $(g\circ X,\sigma_X)$ is a multiderivation over $g\circ f$ of the same weight. If $(Y,\sigma_Y)$ is a multiderivation over $g$ of weight $l$, $(Y\circ f^{\odot_k (l+1)},\sigma_Y\circ f^{\odot_{\tt A} l})$ is also a multiderivation over $g\circ f$.
Throughout the text, where there is no risk of confusion, I have relaxed the notation, denoting the above compositions by $g\circ X$ and $Y\circ f$. 
\end{ex}

The example is fundamental when defining multi-derivations of ind-modules.

Similarly to usual derivations, multiderivations of a free graded module are uniquely determined by their action on generators. The proof is straightforward, and hence omitted.
\begin{prop}\label{prop}
Let $\mathbb{V}$ be a graded $k$-vector space, ${\tt M=A}\otimes_k \mathbb{V}$ a free graded $\tt A$-module, $\tt N$ any graded $\tt A$-module, and $f:\tt M\to N$ a morphism of graded $\tt A$-modules. Morphism of graded vector spaces $X:\op{Sym}^{l+1}_k\mathbb{V}\to \tt N[-1]$, and a morphism of graded $\tt A$-modules $\sigma:\op{Sym}^{l}_{\tt A}\tt M\to \op{Der}(\tt A)[-1]$  uniquely determine a multiderivation over $f$ or weight $l$.
\end{prop}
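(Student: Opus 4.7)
The proof splits naturally into uniqueness and existence, with the only genuine work lying in sign bookkeeping on the graded-symmetric side.

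\textbf{Uniqueness.} Since $\tt M = \tt A\otimes_k\mathbb{V}$, the graded symmetric power $\op{Sym}_k^{l+1}\tt M$ is $k$-linearly spanned by pure tensors $(a_1v_1)\odot\cdots\odot(a_{l+1}v_{l+1})$ with $a_i\in\tt A$ and $v_i\in\mathbb{V}$. Applying the Leibniz-like identity of Definition \ref{multider} to the $(l+1)$-st slot strips the scalar $a_{l+1}$ off $X$ at the cost of a single $\sigma$-contribution. Graded symmetry of $\odot$ implies the same identity holds in every slot up to Koszul signs, so by iterating we can remove all the $a_i$'s and express $X\bigl((a_1v_1)\odot\cdots\odot(a_{l+1}v_{l+1})\bigr)$ purely in terms of values of $\sigma$ and of $X_0 := X|_{\op{Sym}_k^{l+1}\mathbb{V}}$. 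Since these pure tensors $k$-linearly span $\op{Sym}_k^{l+1}\tt M$, the pair $(X_0,\sigma)$ determines $X$.

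\textbf{Existence.} I would take the formula produced in the previous step as a definition. Given $X_0\colon\op{Sym}_k^{l+1}\mathbb{V}\to\tt N[-1]$ and $\sigma\colon\op{Sym}_{\tt A}^l\tt M\to\op{Der}(\tt A)[-1]$, set
\[
X\bigl((a_1v_1)\odot\cdots\odot(a_{l+1}v_{l+1})\bigr) := (-1)^{\epsilon_0}a_1\cdots a_{l+1}\cdot X_0(v_1\odot\cdots\odot v_{l+1}) + \sum_{j=1}^{l+1}(-1)^{\epsilon_j}a_1\cdots\widehat{a_j}\cdots a_{l+1}\cdot\sigma\bigl(v_1\odot\cdots\widehat{v_j}\cdots\odot v_{l+1}\bigr)(a_j)\cdot f(v_j),
\]
where $\epsilon_j$ are the Koszul signs generated by moving $a_j$ past the $v_i$'s and the other $a_i$'s, and extend $k$-linearly. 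Two independent properties must then be verified: (i) that this prescription descends from $\tt M^{\otimes_k(l+1)}$ to $\op{Sym}_k^{l+1}\tt M$, i.e.\ is invariant under adjacent transpositions with the correct Koszul signs; and (ii) that the resulting $k$-linear map satisfies the Leibniz identity of Definition \ref{multider}.

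\textbf{Main obstacle.} Both verifications are purely sign bookkeeping rather than conceptual. For (i) it suffices to check invariance under a single transposition $(i,i+1)$: the structural symmetry of the sum, together with the fact that $X_0$ and $\sigma$ are themselves $\op{Sym}$-valued, reduces the check to the transformation rule for the $\epsilon_j$'s. For (ii) one expands $m_i = a_iv_i$ on both sides of the Leibniz relation, isolates the $j=l+1$ summand (which contributes the $\sigma(m_1\odot\cdots\odot m_l)(a)\,f(m_{l+1})$ term), and checks that the remaining summands repackage into $(-1)^{|a|(1+\sum|m_i|)}\,a\cdot X(m_1\odot\cdots\odot m_{l+1})$. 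I expect no conceptual obstruction beyond carrying out the Koszul-sign arithmetic carefully, which is why the paper is justified in omitting the proof.
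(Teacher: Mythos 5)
Your proposal is correct and is exactly the ``straightforward'' argument the paper has in mind when it omits the proof: strip the coefficients $a_i$ slot by slot using the defining identity and graded symmetry to get uniqueness, then turn the resulting closed formula into the definition and check multilinearity, symmetry, and the Leibniz identity. The only points to watch in the deferred sign bookkeeping are that the Koszul signs $\epsilon_j$ must also account for the degree $+1$ of the operators themselves (the ``$1$'' in the exponent $|a|(1+\sum|m_i|)$ of Definition~\ref{multider}), and that the repackaging in your step (ii) uses both the $\tt A$-linearity of $\sigma$ and the fact that $\sigma(\cdots)$ acts on products in $\tt A$ as a derivation.
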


Let $f:\tt L \to M$, and  $g:\tt M\to  N$ be $\tt A$-module homomorphisms. For  a multiderivation $\mathbb{X}$ of weight $k$ over $f$, and a multiderivation  $\mathbb{Y}$ of weight $l$ over $g$, we say that
$\mathbb{Y}\circ\mathbb{X}=0$ if 
\begin{equation*}
\begin{split}
(X\circ Y)&(m_1\odot\ldots \odot m_{k+l+1})\\
:=&\sum_{\sigma\in \op{Sh}(l+1,k)}(-1)^{|m_1,\ldots m_{k+l+1}|_{\sigma}}X(Y( m_{\sigma (1)}\odot\ldots\odot m_{\sigma (l+1)})\odot m_{\sigma (l+2)}\odot\ldots\odot m_{\sigma (k+l+1)})=0,
\end{split}
\end{equation*}
and
\begin{equation*}
\begin{split}
(\sigma_{X}\circ& Y+\sigma_{X}\sigma_{Y})(m_1\odot\ldots \odot m_{k+l})\\
:=&\sum_{\sigma\in \op{Sh}(l+1,k-1)}(-1)^{|m_1,\ldots m_{k+l}|_{\sigma}}\sigma_{X}(Y( m_{\sigma (1)}\odot\ldots\odot m_{\sigma (l+1)})\odot m_{\sigma (l+2)}\odot\ldots\odot m_{\sigma (k+l)})\\
+&\sum_{\sigma\in \op{Sh}(l,k)}(-1)^{|m_1,\ldots m_{k+l}|_{\sigma}}(-1)^{m_{\sigma(1)}+\ldots + m_{\sigma(l)}}\sigma_{X}( m_{\sigma (1)}\odot\ldots\odot m_{\sigma (l)})\sigma_{Y}( m_{\sigma (l+1)}\odot\ldots\odot m_{\sigma (l+k)})\\
=&0
\end{split}
\end{equation*}
A multiderivation $\mathbb{X}$ over identity squares to zero if $\mathbb{X}\circ\mathbb{X}=0 $.

Given (inhomogeneous) multiderivations $\mathbb{X}=\prod_{k\in \mathbb{N}} \mathbb{X}_k$ over $f$, and $\mathbb{Y}=\prod_{k\in \mathbb{N}} \mathbb{Y}_k$ over $g$, we say that $\mathbb{Y}\circ\mathbb{X}=0$ if for all $k\in \mathbb{N}$,
$$\sum_{i+j=k}\mathbb{Y}_i\circ \mathbb{X}_j=0.$$

\begin{ex}
A square-zero multiderivation of weight zero $\mathbb{X}=(X,\sigma_X)$ over $\op{id}_{\tt M}$ is equivalently a dg module $({\tt M},X)$ over the dg algebra $({\tt A},\sigma_X)$.
\end{ex}

Let $(\tt A,\tt L)$ be a pair of an $\mathbb{N}$-graded commutative (base) algebra $\tt A$, and a  $\mathbb{Z}$-graded $\tt A$ module $\tt L$, together with a square-zero (inhomogeneous)  multiderivation $\mathbb{X}\in  \op{mDer}_{\op{id}}(\tt L, L)$. In particular, $\mathbb{X}_0$ squares to zero, hence $A=({\tt A},X_0)$ is a differential graded commutative algebra $A$, and $M=({\tt M},\sigma_{X_0})$ is a dg module over $A$. 

\begin{defi}(\cite[Definition 14]{Vit}) 
A SH Lie Rinehart pair is a pair $(A,L)$, with $A\in{\tt dgca}(k)$, and $L$ a $\mathbb{Z}$-graded $A$ module, together with an (inhomogeneous) square-zero multiderivation $\mathbb{X}\in  \op{mDer}_{\op{id}}({\tt L}, {\tt L})$ for which $L=({\tt L}[1],X_0)$, $A=({\tt A}, \sigma_{X_0})$.
\end{defi}
The paper does not address morphisms between SH Lie Rinehart pairs. Frankly, that definition would be a mess. However, by the Corollary \ref{cor}, SH Lie Rinehart pairs whose underlying graded modules are flat over its base are equivalently encoded within their Chevalley-Eilenberg complexes, which themselves form a full subcategory ${\tt CE}(k)$ of ${\tt fcdga}(k)$. To avoid technical complications, I define the category of flat SH Lie Rinehart pairs as the category opposite to ${\tt CE}(k)$.

The next goal is to give a precise meaning to the category ${\tt CE}(k)$.
Let $\tt A$ be a non-positively graded commutative algebra, and let ${\tt M}=({\tt M}_{\alpha},p_{\alpha,\beta})\in \op{Pro}(\tt Mod({\tt A}))$ be a pro-graded $\tt A$-module. 

Denote ${\tt M}_{(\alpha,k)}:=\op{Sym}_{\tt A}({\tt M}_{\alpha})/\op{Sym}^{>k}_{\tt A}({\tt M}_{\alpha})$, and for $\beta<\alpha$, $l<k$, denote by $p_{(\alpha,k)(\beta,l)}$ the composition
$$\op{Sym}_{\tt A}({\tt M}_{\alpha})/\op{Sym}^{>k}_{\tt A}({\tt M}_{\alpha})\xrightarrow{\op{Sym}_A p_{\alpha,\beta}} \op{Sym}_{\tt A}({\tt M}_{\beta})/\op{Sym}^{>k}_{\tt A}({\tt M}_{\beta})\twoheadrightarrow \op{Sym}_{\tt A}({\tt M}_{\beta})/\op{Sym}^{>l}_{\tt A}({\tt M}_{\beta}).$$

Inverse system $({\tt M}_{(\alpha,k)},p_{(\alpha,k)(\beta,l)})$ of nilpotent extensions of $\tt A$ is denoted by $\widehat{\op{Sym}}_{\tt A}(\tt M)$, and the canonical projection to $\tt A$ is denoted by $\pi_{\tt M}:\widehat{\op{Sym}}_{\tt A}(\tt M)\to \tt A$.

\begin{defi} Chevalley-Eilenberg category, denoted by ${\tt CE} (k)$, is the full subcategory of ${\tt fcdga}(k)$ whose objects are pairs $((\widehat{\op{Sym}}_{\tt A}({\tt M}),d),{ A})$, with $A\in{\tt cdga}(k)$, $\tt M$ a pro-finite $\tt A$-module, and $d$ any differential on $\widehat{\op{Sym}}_{\tt A}(\tt M)$ which is intertwined by $\pi_{\tt M}$ with the differential on $A$.
\end{defi}

To establish a duality between the objects of ${\tt CE} (k)$ and SH Lie Rinehart pairs, I first introduce the weight decomposition on the Chevalley-Eilenberg side.

A morphism of pro-$\tt A$-modules $f=(f_{\alpha,\beta}):\tt M\to N$ induces a morphism of pro-nilpotent algebras $f_{(\alpha,k)(\beta,l)}: \widehat{\op{Sym}}_{\tt A}(\tt M)\to \widehat{\op{Sym}}_{\tt A}(\tt N)$. A derivation $d=(d_{(\alpha,k)(\beta,l)}):\widehat{\op{Sym}}_{\tt A}(\tt M)\to \widehat{\op{Sym}}_{\tt A}(\tt N)[-1]$ over $f$
decomposes by weights. Namely, for $i\leq k$ and, the weight $n$ component of ${d}_{(\alpha,k)(\beta,l)}$ is a derivation which assigns to $x\in \op{Sym}^i_{\tt A}({\tt M}_{\alpha})$ the homogeneous component of ${d}_{(\alpha,k)(\beta,l)}(x)$ in $\op{Sym}^{i+n}_{\tt A}({\tt N}_{\beta})$ (zero if $i+n>l$). The set of homogeneous derivations over $f_{(\alpha,k)(\beta,l)}$ of weight $n$ is denoted by $\op{Der}^n_{f_{(\alpha, k)(\beta,l)}} ({{\tt M}}_{\alpha,k},{{\tt N}}_{\beta, l})$. As weight components are preserved under the composition with both $p_{(\alpha,k)(\alpha',k')}$ and $p_{(\beta,l)(\beta',l')}$,  they induce a weight decomposition of $d$, denoted by $d=\prod_{n\geq -1} d^n$. Assuming that $\pi_{\tt N}\circ d$ factors through $\pi_{\tt M}$, the $(-1)$ weight component is zero.  

In particular, differential $d$ of a Chevalley-Eilenberg complex $((\widehat{\op{Sym}}_{\tt A}({\tt M}),d),{ A})$ admits a weight decomposition in non-negative weights. Its weight zero component recovers the differential of $A$, and determines a differential $d^l$ on $\tt M$, called the \emph{linear part} of $d$, which respects the Leibniz rule for $\tt A$-action. Hence, $M=({\tt M},d^l)\in{\tt ProMod}_{\op{finite}}(A)$. Similarly, any morphism of pro-graded commutative algebras $g: \widehat{\op{Sym}}_{\tt A}(\tt M)\to \widehat{\op{Sym}}_{\tt B}(\tt N)$ admits a weight decomposition, in particular, any morphism of Chevalley-Eilenberg complexes. Again, morphisms of Chevalley-Eilenberg complexes are concentrated in non-negative weights. Moreover,  its weight zero component intertwines the weight zero of the differential, hence determines a map $M\to N$  in ${\tt ProMod}(A)$, equivalently  (in light of the adjunction \ref{adj}) a map  $g^l:B\otimes_A M\to N$ in ${\tt ProMod}_{\op{finite}}(B)$. $g^l$ is called the \emph{linear part} of $g$.


\begin{theo}\label{Theo1}
Let $\tt A$ be a non-positively graded graded-commutative algebra, $\tt M, N \in \tt Mod_{\op{flat}}(A)$, $f:\tt M\to N$. Let $f^*:\tt N^* \to M^*$ be the morphism in $\op{Pro}({\tt Mod}_{\op{finite}}(\tt A))$ dual to $f$. There is a bijection
\begin{equation}\label{bitna}\op{mDer}_f( {\tt M},{\tt N})\cong \op{Der}^{\geq 0}_{f^*}(\widehat{\op{Sym}}_{\tt A}{\tt N^*},\widehat{\op{Sym}}_{\tt A}{\tt M^*}).\end{equation}
Composition of multiderivatives (on the left) squares to zero if and only if the composition of corresponding derivatives (on the right) squares to zero.
\end{theo}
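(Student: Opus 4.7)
The plan is to reduce the statement to a weight-by-weight bijection at the finite level and then assemble it via the appropriate limits along representing families. Writing $f=(f_{\alpha,\beta})$ relative to directed presentations ${\tt M}\cong\varinjlim_\alpha {\tt M}_\alpha$ and ${\tt N}\cong\varinjlim_\beta {\tt N}_\beta$ provided by Lazard's theorem, the argument used for the isomorphism~(\ref{indhom}) — combined with the fact that $\op{Sym}^{l+1}_k$ commutes with filtered colimits and that finite ${\tt A}$-modules are compact — yields
\begin{equation*}
\op{mDer}_f({\tt M},{\tt N}) \;\cong\; \prod_{l\in\N}\varprojlim_{\alpha}\varinjlim_{\beta}\op{mDer}^l_{f_{\alpha,\beta}}({\tt M}_\alpha,{\tt N}_\beta).
\end{equation*}
Applying the weight decomposition of derivations on completed symmetric algebras recalled in the paragraphs before the theorem, the right-hand side admits an analogous description
$$\op{Der}^{\geq 0}_{f^*}(\widehat{\op{Sym}}_{\tt A}{\tt N}^*,\widehat{\op{Sym}}_{\tt A}{\tt M}^*)\;\cong\;\prod_{l\geq 0}\varprojlim_\alpha\varinjlim_\beta\op{Der}^l_{f^*_{\beta,\alpha}}(\widehat{\op{Sym}}_{\tt A}{\tt N}^*_\beta,\widehat{\op{Sym}}_{\tt A}{\tt M}^*_\alpha).$$
It therefore suffices to exhibit, for each fixed $l$ and pair $(\alpha,\beta)$, a natural bijection between the two finite-level sets which intertwines composition.

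For this finite-level bijection I would use the analogue of Proposition~\ref{prop} for derivations on the completed symmetric algebra: a derivation $d$ over $f^*_{\beta,\alpha}$ of weight $l$ is uniquely determined by its restriction to the generators ${\tt A}$ and ${\tt N}^*_\beta$. The restriction $d|_{\tt A}$ is a degree $-1$ derivation ${\tt A}\to\op{Sym}^l_{\tt A}{\tt M}^*_\alpha$ which, using the finite-module identification $\op{Sym}^l_{\tt A}{\tt M}^*_\alpha\cong(\op{Sym}^l_{\tt A}{\tt M}_\alpha)^*$, corresponds to an ${\tt A}$-linear map $\sigma_X\colon\op{Sym}^l_{\tt A}{\tt M}_\alpha\to\op{Der}({\tt A})[-1]$. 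The restriction $d|_{{\tt N}^*_\beta}$ is a graded $k$-linear map ${\tt N}^*_\beta\to\op{Sym}^{l+1}_{\tt A}{\tt M}^*_\alpha[-1]$ which by the same duality becomes a $k$-linear map $X\colon\op{Sym}^{l+1}_k{\tt M}_\alpha\to{\tt N}_\beta[-1]$. A direct computation shows that the graded Leibniz rule for $d$ applied to a product of an element of ${\tt A}$ with one of ${\tt N}^*_\beta$ dualizes precisely to the relation of Definition~\ref{multider} for the pair $(X,\sigma_X)$, giving the required bijection $\Phi$.

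For the composition statement, both $\Phi(\mathbb{X})\circ\Phi(\mathbb{Y})$ on the algebra side and the shuffle-formula composition $\mathbb{X}\circ\mathbb{Y}$ on the multiderivation side are uniquely determined by their action on the generators ${\tt A}$ and ${\tt N}^*$. Expanding $\Phi(\mathbb{X})\circ\Phi(\mathbb{Y})$ on ${\tt N}^*$ by iteratively applying the graded derivation rule produces a sum over shuffles whose Koszul signs are exactly the factors $(-1)^{|m_1,\ldots,m_{k+l+1}|_\sigma}$ that appear in the formula for $X\circ Y$; the parallel calculation on ${\tt A}$ reproduces the expression $\sigma_X\circ Y+\sigma_X\sigma_Y$ given before the theorem. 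Specialising to $\mathbb{X}=\mathbb{Y}$ yields the equivalence of the two square-zero conditions.

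The main obstacle is the sign and shift bookkeeping in the last two paragraphs: one must check that the Leibniz identity for $d$ on ${\tt A}\cdot{\tt N}^*_\beta$ matches Definition~\ref{multider} on the nose (and not merely up to a Koszul correction), and that the shuffle signs in Vitagliano's composition formula coincide exactly with those produced by iterated application of the graded derivation rule on $\widehat{\op{Sym}}_{\tt A}{\tt M}^*_\alpha$. Both verifications are the relative, pro-finite analogues of the non-relative calculations of~\cite{Vit} and require no new conceptual input.
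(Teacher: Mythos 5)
Your proposal is correct and follows essentially the same route as the paper: reduce to a fixed weight, pass to the finite level via the $\varprojlim_\alpha\varinjlim_\beta$ description of representing families (using finiteness of the ${\tt M}_\alpha$ to pull the colimit over $\beta$ outside), and then dualize weight-$l$ multiderivations into weight-$l$ derivations determined on generators. The only difference is one of detail: the paper writes out the explicit shuffle formulas (including the anchor correction terms relating $d|_{{\tt N}^*_\beta}$ to $X$) that you defer to ``sign and shift bookkeeping.''
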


\begin{proof}
As multiderivations over $f:\tt M\to N$ and derivations over $f^*:\widehat{\op{Sym}}_{\tt A}(\tt N^*)\to \widehat{\op{Sym}}_{\tt A}(\tt M^*)$ both admit weight decompositions, it suffices to prove duality for a fixed weight (see Equation \ref{bitna}). That the two square-zero properties are equivalent follows from the explicit construction of the equivalence, and the Proposition \ref{cocnrete2}. 

Fix a weight $l\in\mathbb{N}$. Let ${\tt M}=\varinjlim_{\alpha\in A} {\tt M}_\alpha$, and ${\tt N}=\varinjlim_{\beta\in B} {\tt N}_\beta$, with $\tt M_\alpha, N_\beta\in  \tt Mod_{\op{finite}}(A)$. For $\alpha<\alpha'\in A$, denote by $i_{\alpha\alpha'}:\tt M_{\alpha}\to M_{\alpha'}$ maps of the injective system $({\tt M}_{\alpha})$ and by 
$i_\alpha: \tt M_{\alpha}\to M$ maps into the colimit. Observe that $\op{Sym}^{l+1}_k({\tt M})=\varinjlim_{\alpha}\op{Sym}^{l+1}_k(\tt M_{\alpha})$.
Let $f:\tt M \to N$ be an $\tt A$-module map. 

In the bijection
\begin{equation}\label{bzvz1}\op{Hom}_{{\tt Mod} (k)}( {\op{Sym}^{l+1}_k(\tt M)}, {\tt N}[-1])\cong\varprojlim_\alpha\op{Hom}_{{\tt Mod} (k)}( \op{Sym}^{l+1}_k(\tt M_{\alpha}), {\tt N}[-1]),\end{equation}
 whenever $X$ is a multiderivation over $f$ with anchor $\sigma$, $X\circ i_{\alpha}^{\otimes_{\tt A} l+1}$ is a multiderivation over $f\circ i_\alpha$ (see the Example \ref{ex}) with anchor $\sigma_{\alpha}:=\sigma\circ {i_{\alpha}}^{\otimes_{\tt A} l}$. Clearly, for $\alpha<\alpha'$, $\sigma_{\alpha}=\sigma_{\alpha'}\circ {i_{\alpha\alpha'}}^{\otimes_{\tt A} l}$, and the bijection
\begin{equation}\label{bzvz1.5}\op{Hom}_{{\tt Mod} ({\tt A})}( {\op{Sym}^{l}_{\tt A}(\tt M)}, (\op{Der} {\tt A})[-1])\cong\varprojlim_\alpha\op{Hom}_{{\tt Mod} ({\tt A})}( \op{Sym}^{l}_{\tt A}(\tt M_{\alpha}),(\op{Der} {\tt A})[-1])\end{equation}
maps $\sigma$ to $(\sigma_\alpha)_{\alpha\in I}$.
Conversely, given $g\in\op{Hom}_{{\tt Mod} (k)}( {\op{Sym}^{l+1}_k(\tt M)}, {\tt N}[-1])$, if for each $\alpha$, $g\circ i_\alpha^{\otimes_{\tt A} l+1}$ is a multiderivation over $f\circ i_\alpha$, whose anchors satisfy $\sigma_{\alpha'}=\sigma_{\alpha}\circ {i_{\alpha\alpha'}}^{\otimes_{\tt A} l}$ for $\alpha<\alpha'$, then $g$ is a multiderivation over $f$ whose anchor $\sigma$ corresponds under the bijection \ref{bzvz1.5} to the directed system $(\sigma_{\alpha})$. Namely, every monomial $m_1\odot\ldots\odot m_{l+1}\in \op{Sym}^{l+1}_k(\tt M)$ is in the image of $i_\alpha$ for some $\alpha$, hence
\begin{equation*}
\begin{split}
g(m_1\odot\ldots\odot m_{l}\odot a\cdot m_{l+1})=&(g\circ i_{\alpha}^{\otimes_{\tt A} l+1})(m_1^{\alpha}\odot\ldots\odot m_{l}^{\alpha}\odot a\cdot m_{l+1}^{\alpha})\\
=&\sigma_\alpha (m_1^{\alpha}\otimes\ldots\otimes m_{l}^{\alpha})(a)\cdot f\circ i_\alpha(m_{l+1}^{\alpha})\\
+&(-1)^{|a|(1+\sum |m_i|)} a\cdot g\circ i_\alpha^{\otimes_{\tt A} l+1}(m_1^{\alpha}\otimes\ldots\otimes m_{l+1}^{\alpha})\\
=&\sigma (m_1\otimes\ldots\otimes m_{l})(a)\cdot f(m_{l+1})+(-1)^{|a|(1+\sum |m_i|)} a\cdot g(m_1\otimes\ldots\otimes m_{l+1}).
\end{split}
\end{equation*}
Finally,
\begin{equation}\label{bzvz2}\op{mDer}^l_f(\varinjlim_\alpha {\tt M}_\alpha, \varinjlim_\beta {\tt N}_\beta)\cong\varprojlim_\alpha\op{mDer}^l_{f\circ i_\alpha}( {\tt M}_\alpha, \varinjlim_\beta {\tt N}_\beta).\end{equation}

Denote
$$f=\{f_{\alpha \beta}\}\in\varprojlim_\alpha\varinjlim_\beta\op{Hom}_{\tt Mod (A)} ({\tt M}_\alpha,  {\tt N}_\beta).$$
 For a fixed $\alpha$, and any $\beta$ for which  $f_{\alpha \beta}$ is in the compatible family, $f\circ i_\alpha=i_\beta \circ f_{\alpha\beta}$.
The map
\begin{equation}\label{bzvz3}
 \varinjlim_\beta\op{mDer}^l_{f_{\alpha\beta}} ({\tt M}_\alpha,  {\tt N}_\beta)\to\op{mDer}^l_{f\circ i_\alpha}( {\tt M}_\alpha, \varinjlim_\beta {\tt N}_\beta) ,\hspace{5pt} [(X_{\alpha\beta},\sigma_\alpha)]\mapsto (i_\beta\circ X_{\alpha\beta}, \sigma_\alpha)\end{equation}
is bijective. Indeed, consider any $\beta''$ with $f_{\alpha\beta''}$ in the compatible family, and denote by $\{x_1\ldots x_n\}$ a basis for $\tt M_{\alpha}$. Given $(X_{\alpha},\sigma_\alpha)\in \op{mDer}^l_{f\circ i_\alpha}( {\tt M}_\alpha, \varinjlim_\beta {\tt N}_\beta)$, let $\beta'\geq \beta''$ be the smallest index for which all $X_{\alpha}(x_{i_1}\odot\ldots\odot x_{i_{l+1}})$ lie in the image of $i_{\beta'}.$ Let $x_{(i_1,\ldots,i_{l+1})}^{\beta'}\in\tt N_{\beta'}$ be any element in the $i_{\beta'}$-preimage of $X_{\alpha}(x_{i_1}\odot\ldots\odot x_{i_{l+1}})$. It follows that there is a unique multiderivation $X_{\alpha\beta'}$ over $f_{\alpha\beta'}$ with anchor $\sigma_\alpha$ such that $X_{\alpha\beta'}(x_{i_1}\odot\ldots\odot x_{i_{l+1}})=x_{(i_1,\ldots,i_{l+1})}^{\beta'}$. By the proposition \ref{prop},  $i_{\beta'}\circ X_{\alpha\beta'}=X_\alpha$, showing that the map (\ref{bzvz3}) is indeed bijective. Finally, \ref{bzvz2} and \ref{bzvz3} give
\begin{equation}\label{ref}
\op{mDer}_f^l(\varinjlim_\alpha {\tt M}_\alpha, \varinjlim_\beta {\tt N}_\beta)\cong\varprojlim_{\alpha}\varinjlim_\beta\op{mDer}^l_{f_{\alpha\beta}} ({\tt M}_\alpha,  {\tt N}_\beta).\end{equation}

Take $(\mathbb{X}_{\alpha,\beta})\in\varprojlim_{\alpha}\varinjlim_\beta\op{mDer}^l_{f_{\alpha\beta}} ({\tt M}_\alpha,  {\tt N}_\beta).$

Each $\mathbb{X}_{\alpha\beta}$ induces a family of derivations $\{d_{(\beta,n)(\alpha,m)}\in \op{Der}^{l}_{f_{\alpha\beta}^*} ({\tt N}_{\beta,n}^*,{\tt M}_{\alpha,m}^*):m\leq n+l\}$, defined by $d_{(\alpha,m)(\beta,n)}(\omega)=0$, for $\omega\in \op{Sym}^k_{\tt A} \tt N_{\beta}^*$, with $k+l>m$, and otherwise by

\begin{equation*}
\begin{split}
&(d_{(\beta,n)(\alpha,m)}\omega)(x_1\odot\ldots\odot x_{k+l})=\\
& \sum_{\sigma\in \op{Sh}(l,k)}(-1)^{|\omega|(x_{\sigma(1)}+\ldots x_{\sigma(l)})+|x_1,\ldots,x_{k+l}|_\sigma}\sigma_{X_{\alpha\beta}}(x_{\sigma(1)}\odot\ldots \odot x_{\sigma(l)})(\omega(f(x_{\sigma(l+1)})\odot\ldots \odot f(x_{\sigma(k+l)})))\\
&-(-1)^{|\omega|}\sum_{\sigma\in \op{Sh}(l+1,k-1)}(-1)^{|x_1,\ldots x_{k+l}|_{\sigma}}\omega(X( x_{\sigma (1)}\odot\ldots\odot x_{\sigma (l+1)})\odot f(x_{\sigma (l+2)})\odot\ldots\odot f(x_{\sigma (k+l)})).
\end{split}
\end{equation*}

It is an exercise to show that the family $( d_{(\beta,n)(\alpha,m)})$ is compatible.  Hence,
$$(d_{(\beta,n)(\alpha,m)})\in\varprojlim_{(\alpha,m)}\varinjlim_{(\beta,n)}\op{Der}^{l}_{f_{(\alpha, m)(\beta,n)}^*} ({{\tt N}^*}_{\beta,n},{{\tt M}^*}_{\alpha,m}).$$

Conversely, starting from a compatible family $(d_{(\beta,n)(\alpha,m)})$, $(X_{\alpha,\beta})$ is reconstructed as follows. For fixed index $\alpha$ let $\beta$ and $n\geq 1$ be such that $d_{(\beta,n)(\alpha,l)}$ is contained in the compatible family, and define the multiderivation $X_{\alpha \beta}$ implicitly for $x^*\in \tt N_\beta^*$ by
\begin{equation*} 
\begin{split}
x^*(X&_{\alpha,\beta}(x_1\odot\ldots\odot x_{l+1}))\\
=&\sum_{i=1\ldots l+1} (-1)^{|x^*|(1+|x_1|+\ldots|x_{l+1}|-|x_i|)+|x_i|(|x_{i+1}+\ldots + |x_{l+1}|)}(\sigma_{X_{\alpha \beta}}(x_1\odot\ldots\hat{x}_i\ldots \odot x_{l+1}))(x^*(f(x_i)))\\
+&(-1)^{|x^*|}(d_{(\beta,n)(\alpha,l)})(x^*)(x_1\odot\ldots\odot x_{l+1})
\end{split}
\end{equation*}
with the anchor is given by
$$(\sigma_{X_{\alpha \beta}}(x_1\odot\ldots\odot x_{l}))(a)=(-1)^{|a|(|x_1|+\ldots+|x_l|)}(d_{(\beta,n)(\alpha,l)}(a))(x_1\odot\ldots\odot x_{l}).$$
Finally,
\begin{equation}\label{bitna}\op{mDer}_f^l( {\tt M},{\tt N})\cong\varprojlim_{(\alpha,m)}\varinjlim_{(\beta,n)}\op{Der}_{f_{(\alpha,m)(\beta,n)}^*}^{l} ({\tt N}_{\beta,n}^*,{\tt M}_{\alpha,m}^*).\end{equation}
\end{proof}

Applying the theorem to $f=\op{id}$ we obtain the following:
\begin{cor}\label{cor}
For a graded-flat $A$-module $L$, the structure of a SH Lie Rinehart pair $(A,L)$ is equivalently encoded by a Chevalley-Eilenberg complex $((\widehat{\op{Sym}}_{\tt A}({\tt L}^*[1]),d),A)$ such that $({\tt L}^*,d^l)$ is dual to $L$.
\end{cor}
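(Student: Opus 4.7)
The plan is to specialize Theorem~\ref{Theo1} to the case $\tt M=N=L$ and $f=\op{id}_{\tt L}$, where $\tt L$ is the graded-flat underlying $A$-module of the SH Lie Rinehart pair. The theorem then supplies a bijection
$$\op{mDer}_{\op{id}}({\tt L},{\tt L}) \;\cong\; \op{Der}^{\geq 0}_{\op{id}}\bigl(\widehat{\op{Sym}}_{\tt A}({\tt L}^*[1]),\,\widehat{\op{Sym}}_{\tt A}({\tt L}^*[1])\bigr)$$
between inhomogeneous multiderivations over the identity on $\tt L$ and those derivations of the completed symmetric algebra on ${\tt L}^*[1]$ concentrated in non-negative weights. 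The graded-flatness hypothesis is exactly what is required to dualize $\tt L$ into an object $\tt L^*\in\op{Pro}({\tt Mod}_{\op{finite}}(\tt A))$ via Proposition~\ref{dgduality}, while the shift $[1]$ absorbs the degree conventions of Definition~\ref{multider} (namely $X:\op{Sym}^{l+1}_k({\tt L})\to{\tt L}[-1]$ corresponds to a weight-$l$, degree-$(-1)$ derivation on the cogenerators).

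Next I transport the square-zero constraint using the second clause of Theorem~\ref{Theo1}: $\mathbb{X}\circ\mathbb{X}=0$ on the left matches $d\circ d=0$ on the right under the bijection. Hence square-zero SH multiderivations correspond bijectively with Chevalley--Eilenberg differentials on $\widehat{\op{Sym}}_{\tt A}({\tt L}^*[1])$.

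It remains to match the residual dgca and dg-module data by isolating the weight-zero component. Writing $\mathbb{X}=\prod_{k\geq 0}\mathbb{X}_k$ with $\mathbb{X}_0=(X_0,\sigma_{X_0})$, the weight-zero part supplies a square-zero derivation $X_0$ on $\tt L[1]$ (defining the dg $A$-module $L=({\tt L}[1],X_0)$) and a square-zero derivation $\sigma_{X_0}$ on $\tt A$ (defining $A=({\tt A},\sigma_{X_0})\in{\tt dgca}(k)$). Dually, the weight-zero component $d^0=d^l$ of the CE differential is the linear part whose restriction to the base recovers the differential of $A$ and whose induced map on ${\tt L}^*$ dualizes back, again via Proposition~\ref{dgduality}, to give exactly $(A,L)$ with the prescribed differentials. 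The restriction that $d$ sit in non-negative weights is precisely the intertwining property with the projection $\pi_{{\tt L}^*[1]}:\widehat{\op{Sym}}_{\tt A}({\tt L}^*[1])\to{\tt A}$, which is built into the definition of an object of ${\tt CE}(k)$.

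The main obstacle is essentially none beyond the bookkeeping just sketched: Theorem~\ref{Theo1} does all the genuine mathematical work, and the corollary merely unpacks its $f=\op{id}$ specialization along the weight decomposition. The only checks that must be made explicitly are that the graded-flat hypothesis on $L$ licenses the application of the theorem, and that the shift conventions align consistently between the multiderivation side and the cogenerator side — both routine.
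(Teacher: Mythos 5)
Your proposal is correct and follows exactly the paper's route: the corollary is stated immediately after the remark ``Applying the theorem to $f=\op{id}$ we obtain the following,'' so the intended proof is precisely the specialization of Theorem~\ref{Theo1} to the identity, with the square-zero clause and the weight-zero bookkeeping you describe. Nothing further is needed.
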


\section{Category of cofibrant objects}

Denote by  ${\tt CE}_{\op{cof}}(k)$ the full subcategory of ${\tt CE}(k)$ consisting of Chelvalley-Eilenberg complexes corresponding to SH Lie Rinehart pairs $(A,M)$ where $A$ is a semi-free $dg$-algebra over $k$, and $M$ is a cell complex in the model category of dg $A$-modules. Explicitly, a cell complex is a graded free $A$-module with a well-ordered set of homogeneous generators $(m_i)_{i\in I}$ whose differential satisfies the lowering condition
$$d(m_i)\in A\langle m_j\rangle_{j<i}.$$
A proof in a slightly more general setting of $\mathcal{D}$-modules is in \cite{hac}.

Take $A\in {\tt dgca}(k)$, and a free graded $\tt A$-module $\tt M={\tt A}\langle m_i\rangle_{i\in I}$. Denoting by $\vec{I}$ the small category of finite subsets of $I$ and inclusions,
$$ {\tt M}={\tt A}\langle m_i\rangle_{i\in I}=\varinjlim_{J\in \vec{I}} {\tt A}\langle m_j\rangle_{j\in J}.$$
Its dual pro-finite module is represented by the inverse system of projections ${\tt A}\langle m_j^*\rangle_{j\in J}\to {\tt A}\langle m_{j'}^*\rangle_{j'\in J'}$ for $J'\subset J$ whose projective limit is the graded $\tt A$ module $\prod_{i\in I}\tt A\langle m_i^* \rangle$, denoted by ${\tt A}\langle\langle m_i^*\rangle\rangle_{i\in I}$. Differential on the inverse system induces a differential on the projective limit.  Denoting
\begin{equation}\label{condition}dm_i^*=\prod_j a^j_i m_j^*,\end{equation}
for a fixed $j$ there are at most finitely many indices $i$ with $a^i_j\neq 0$.
 Conversely, there exists a unique differential on ${\tt A}\langle\langle m_i^*\rangle\rangle_{i\in I}$ which lifts to  a differential of pro-graded modules and satisfies (\ref{condition}). Its lift is the dual of an $A$-cell if and only if the differential satisfies rising condition $a^i_j=0$ for $i\leq j$. An important observation is that when $M$ is itself a cell complex in $A$-modules, the projective limit of its dual together with the induced differential is equal to the naive dual $\underline{\op{Hom}}_{{\tt Mod}(A)}(M,A)$. 

The same holds for morphisms. Given a ${\tt dgca}(k)$-map $f_0: A\to B$, there exists a unique morphism $f^*:{\tt A}\langle\langle m_i^*\rangle\rangle_{i\in I}\to {\tt B}\langle\langle n_j^*\rangle\rangle_{j\in J}$ of $\tt A$-modules which lifts to a morphism of pro-graded $\tt A$-modules and satisfies
\begin{equation}\label{conditionm}f^*(m_i^*)=\prod_j b^j_i n_j^*,\end{equation}
where for a fixed $j$ there are at most finitely many indices $i$ with $a^i_j\neq 0$. The map intertwines the differential if and only if its lift does.

Projective limit of the inverse system of graded-commutative algebras $\widehat{\op{Sym}}_{\tt A}({\tt M^*})$
is denoted by ${\tt A}[[m_i^*]]_{i\in I}$. Its elements are
$$y=\prod_{\substack{n\in \mathbb{N}\\i_1\ldots i_n\in I}}a^{i_1\ldots i_n}m_{i_1}^*\cdots m_{i_n}^*.$$
Similarly as above, a differential on ${\tt A}[[m_i^*]]_{i\in I}$ which lifts to the level of pro-graded algebras  is uniquely determined by
$$d_{M^*}|_A\in\op{Der}(A,A[[m_i^*]]),\hspace{10pt} d_{M^*}m_i^*=\prod_{\substack{n\in \mathbb{N}_{>0}\\i_1\ldots i_n\in I}}a_i^{i_1\ldots i_n}m_{i_1}^*\cdots m_{i_n}^*,$$
where for any n-tuple $(i_1\ldots i_n)\in I^{\times n}$ there is at most finitely many $i\in I$ for which the coefficient $a_i^{i_1\ldots i_n}$ is non-zero. If $d_{M^*}^2=0$, the lift is canonically a fat dgca $((\widehat{\op{Sym}}_{\tt A}({\tt M^*}),d_{M^*}),({\tt A},d_A))$, for $d_A=\pi_A\circ d_{M^*}|_A$. Linear part $d_{M^*}^l:\tt M^*\to M^*[-1]$ of $d_{M^*}$ is defined by the same equation as \ref{condition}.

A morphism $f^*:{\tt A}[[m_i^*]]_{i\in I}\to{\tt B}[[n_j^*]]_{j\in J}$ which lifts to the level of pro-graded algebras is uniquely determined by
$$f^*|_{\tt A}\in\op{Hom}_{{\tt Mod}({\tt A})}({\tt A},{\tt B}[[n_j^*]]),\hspace{10pt} f^*(m_i^*)=\prod_{\substack{n\in \mathbb{N}_{>0}\\j_1\ldots j_n\in J}}b_i^{j_1\ldots j_n}n_{j_1}^*\cdots n_{j_n}^*,$$
where for any n-tuple $(j_1\ldots j_n)\in J^{\times n}$ there is at most finitely many $i\in I$ for which the coefficient $b_i^{j_1\ldots j_n}$ is non-zero. Given fat cdga-s
 $$((\widehat{\op{Sym}}_{\tt A}({\tt M^*}),d_{M^*}),({\tt A},d_A)),\hspace{10pt}((\widehat{\op{Sym}}_{\tt B}({\tt N^*}),d_{N^*}),({\tt B},d_B)),$$
such a morphism $f^*$, if it intertwines differentials $d_{M^*}$ and $d_{N^*}$, determines canonically a morphism of fat cdga-s $(f^*,f_0)$, with $f_0=\pi_B\circ f^*|_A$. Its linear part ${f^*}^l:B\otimes_A M^*\to N^*$ is defined by the same equation as \ref{conditionm}.

\begin{rem}The above discussion enables one to work equivalently with limits of pro-graded objects together with appropriate differentials and morphisms, which in turn significantly simplifies calculations. In the sequel, I distinguish between pro-objects and their limits only when I deem it necessary for the understanding of the text.\end{rem}\bigskip

%

In order to endow the category ${\tt CE}_{\op{cof}}(k)$ with the structure of the category of cofibrant objects, the first step is to define weak equivalences and cofibrations.

A map in ${\tt CE}_{\op{cof}}(k)$ (with the differentials suppressed from the notation)  \begin{equation}(\label{map}f^*,f_0):(\widehat{\op{Sym}}_{\tt A}(\tt M^*),{\tt A})\to(\widehat{\op{Sym}}_{\tt B}(\tt N^*),{\tt B})\end{equation}
is a weak equivalence if both $f_0:A\to B$ and the dual $f^l:N\to B\otimes_A M$ of ${f^*}^l$ are weak equivalences.

Cofibrations are are maps (\ref{map}) such that
\begin{itemize}
\item
$f_0$ is a relative cell complex in ${\tt dgca}(k),$ that is, an inclusion of underlying free graded commutative algebras $k[x_a]_{a\in A}\hookrightarrow k[x_a,y_b]_{a\in A, b\in B};$
\item denoting ${\tt N}^*={\tt B}\langle\langle m_i^*\rangle\rangle_{i\in I}$, there exists
 a subset $I'\subset I$, such that ${\tt M}^*={\tt A}\langle\langle m_i^*\rangle\rangle_{i\in I'}$, and the map
 $f^*:{\tt B} \otimes_{\tt A} \widehat{\op{Sym}}_{\tt A}({\tt M^*})\to\widehat{\op{Sym}}_{\tt B}({\tt N^*})$ is the inclusion
 ${\tt B}[[m_i^*]]_{i\in I'}\to{\tt B}[[m_i^*]]_{i\in I}$ of formal power series. No lowering condition is required for the differential.
\end{itemize}

\subsection*{Finite coproducts.}
Coproduct of $({\tt A}[[m_i^*]]_{i\in I}, d_{M^*})$ and $({\tt B}[[n_j^*]]_{j\in J}, d_{N^*})$ is 
 the pair $$(({\tt A}\otimes_k{\tt B})[[m_i^*,n_j^*]]_{i\in I, j\in J}, d_{\sqcup}),$$
with 
\begin{equation*}
\begin{split}
&d_{\sqcup}(a\otimes b)=d_{M^*}(a)\cdot b+(-1)^{|a|}a\cdot d_{N^*}(b)\text{ for }a\in{\tt A}, b\in{\tt B};\\
&d_{\sqcup}(m_i^*)=d_{M^*}(m_i^*),\hspace{5pt}d_{\sqcup}(n_j^*)=d_{N^*}(n_j^*).
\end{split}
\end{equation*} 
$d^l_{\sqcup}$ satisfies the rising condition for the unique ordering of $I\cup J$ which is preserved by inclusions of both $I$ and $J$ and such that $i<j$ for all $i\in I, i\in J$.

Initial object is $(k,k)$ with zero-differential.

\subsection*{Pushouts along cofibrations}

Pushout in ${\tt CE}_{\op{cof}}(k)$ of a map $f^*:({\tt A}[[m_i^*]]_{i\in I'}, d_{M'})\to ({\tt C}[[n_j^*]]_{i\in J},d_N)$ along a cofibration $g^*:({\tt A}[[m_i^*]]_{i\in I'}, d_{M'})\to ({\tt A}[x_a]_{a\in A}[[m_i^*]]_{i\in I}, d_M)$ is on the level of pro-graded algebras given by the commutative diagram
\begin{center}
\begin{equation}\label{com1}
\begin{tikzcd}
{\tt A}[[m_i^*]]_{i\in I'}\arrow[r,"f^*"]\arrow[d,"g^*"]&{\tt C}[[n_j^*]]_{j\in J}\arrow[d,"\gamma^*"]\\
{\tt A}[x_a]_{a\in A}[[m_i^*]]_{i\in I}\arrow[r,"\varphi^*"]&{\tt C}[x_a]_{a\in A}[[m_i^*,n_j^*]]_{i\in I\setminus I', j\in J}.
\end{tikzcd}
\end{equation}
\end{center}
Set $(I\setminus I')\cup J$ is well ordered with the unique ordering which respects the inclusions from $(I\setminus I')$ and $J$, and such that $i<j$ for all $j\in J, i\in I\setminus I'$. The map $\gamma^*$ is defined as follows: on $\tt C$ it is inclusion ${\tt C}\hookrightarrow {\tt C}[x_a]_{a\in A}$, and it maps the formal generators to themselves. $\varphi^*$  is defined on ${\tt A}$ as the composition $\gamma^*\circ f^*$, polynomial generators $x_a$ are mapped to themselves, whereas on formal generators
$$
   \phi^*(m_i^*) =\begin{cases}
\gamma^*\circ f (m_i^*), & \text{ if } i\in I',\\
m_i^*, & \text{otherwise}.
\end{cases}
 $$
Differential $d$ on ${\tt A}[x_a]_{a\in A}\otimes_{\tt A} {\tt C}[[m_i^*,n_j^*]]_{i\in I\setminus I', j\in J}$ is the unique differential intertwined by $\gamma^*$ and $\varphi^*$. 
It is clear from the above defined ordering on $(I\setminus I')\cup J$ that the rising condition on differential's linear part is satisfied. Notice that $\gamma_0$ is the pushout in ${\tt dgca}(k)$ of the cofibration $g_0$ along $f_0$, hence a cofibration. With this, it is obvious that $\gamma^*$ is also a cofibration. It is left to show that $\gamma^*$ is a weak equivalence if $g^*$ is such. Again, in this case, $\gamma_0$ is a pushout of acyclic cofibration, hence acyclic. It remains to prove that $ \gamma^l$  is a weak equivalence, equivalently, that its kernel $\op{Ker}(\gamma^l)$ is acyclic. Denoting by $B$ the base dg algebra of the Chevalley-Eilenberg complex in the lower-left vertex of the commutative diagram \ref{com1} (${\tt B}:={\tt A}[x_a]_{a\in A}$), and by $D$ that of its lower-right vertex, it follows from the explicit construction that $\op{Ker}(\gamma^l)=D\otimes_B\op{Ker}(g^l)$. As a graded $\tt B$-module, $\op{Ker}(g^l)={\tt B}\langle m_i\rangle_{i\in I\setminus I'}$, and its differential satisfies the lowering condition with respect to the ordering inherited from $I$. Thus it is cofibrant and acyclic $B$-module, so its tensor product with $D$ is acyclic.
\subsection*{2-out-of-3} Let (with differentials once again suppressed from the notation)
\begin{equation*}
\begin{split}
 (f^*,f_0):&(\widehat{\op{Sym}}_{\tt A}(\tt M^*),{\tt A})\to(\widehat{\op{Sym}}_{\tt B}(\tt N^*),{\tt B})\\
(g^*,g_0):&(\widehat{\op{Sym}}_{\tt B}(\tt N^*),{\tt B})\to(\widehat{\op{Sym}}_{\tt C}(\tt L^*),{\tt C})
\end{split}
\end{equation*}
be maps in ${\tt CE}_{\op{cof}}(k)$. Maps $f_0$, $g_0$, and $f_0\circ g_0$ satisfy the 2-out-of-3 property as morphisms in ${\tt dgca}(k)$. Assuming  $f_0$, $g_0$ and $f_0\circ g_0$ are all weak equivalences, it is remains to show that  maps $f^l:N\to B\otimes_A M$, $g^l:L\to C\otimes_B N$, and $(f\circ g)^l:L\to  C\otimes_A M$ satisfies 2-out-for-three property. As $(f\circ g)^l=(C\otimes_B f^l)\circ g^l$; all that is required is to prove that $C\otimes_B f^l$ is a weak equivalence of $C$-modules if and only if $f^l$ is a weak equivalence of $B$-modules. Unit of the free-forgetful adjunction between $B$ and $C$-modules, $\eta_N:N\to  C\otimes_B N$ is a weak equivalence on cofibrant $B$-modules. With the the naturality of the unit,
\begin{center}
$$\begin{tikzcd}
N\arrow[r,"\eta_N"]\arrow[d,"f^l"]&\arrow[d,"C\otimes_B f^l"]C\otimes_B N\\
B\otimes_A M\arrow[r,"\eta_{B\otimes_A M}"]&C\otimes_B(B\otimes_A M)
\end{tikzcd}$$
\end{center}
the statement is a consequence the 2-out-of-3 property in the category of $B$-modules.
\subsection*{Cylinder object}
\begin{lemma}
Let $p: A\to B$ be a trivial fibrantion in ${\tt dgca}(k)$, and let $({\tt B}\langle m_i\rangle_{i\in I},d)$ be a cell complex in ${\tt Mod}(B)$. Then there exists a differential $\delta$ on ${\tt A}\langle m_i\rangle_{i\in I}$ such that: 
\begin{itemize}
\item $({\tt A}\langle m_i\rangle_{i\in I},\delta)$ is a cell complex in  ${\tt Mod}(A)$;
\item the projection $p:{\tt A}\langle m_i\rangle_{i\in I}\to{\tt B}\langle m_i\rangle_{i\in I}$ intertwines the differentials.
\end{itemize}
\end{lemma}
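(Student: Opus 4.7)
The plan is to construct $\delta$ by transfinite induction on the well-ordered generator set $I$, exploiting the fact that $K:=\ker p$ is an acyclic chain complex. Indeed, a trivial fibration in ${\tt dgca}(k)$ is a surjective quasi-isomorphism, so the long exact sequence attached to $0\to K\to A\to B\to 0$ forces $H^\bullet(K)=0$. Writing ${\tt A}\langle m_i\rangle_{i\in I}\cong {\tt A}\otimes_k k\langle m_i\rangle_{i\in I}$ as graded $A$-modules, the inclusion $K\hookrightarrow A$ produces a graded $A$-submodule $K\langle m_i\rangle_{i\in I}\subset A\langle m_i\rangle_{i\in I}$ with quotient $B\langle m_i\rangle_{i\in I}$; once $\delta$ is defined this will be a short exact sequence of chain complexes.

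The inductive hypothesis at stage $i\in I$ is: $\delta$ has been defined on ${\tt A}\langle m_j\rangle_{j<i}$, satisfying $\delta^2=0$, the rising condition $\delta(m_j)\in A\langle m_{j'}\rangle_{j'<j}$ for $j<i$, intertwining with $d$ under $p$, and moreover the subcomplex $(K\langle m_j\rangle_{j<i},\delta)$ is acyclic. To extend to $m_i$, use surjectivity of $p$ to lift $d(m_i)\in B\langle m_j\rangle_{j<i}$ to some $\tilde c\in A\langle m_j\rangle_{j<i}$. Then $\delta\tilde c$ projects to $d^2(m_i)=0$, so it lies in $K\langle m_j\rangle_{j<i}$, and is $\delta$-closed by $\delta^2=0$. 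The inductive acyclicity produces $k\in K\langle m_j\rangle_{j<i}$ with $\delta\tilde c=\delta k$; setting $\delta(m_i):=\tilde c-k$ and extending by the Leibniz rule to $A\langle m_j\rangle_{j\le i}$ yields the required derivation, intertwined with $d(m_i)$ under $p$ (since $k\in\ker p$) and satisfying $\delta^2(m_i)=0$.

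To propagate the acyclicity hypothesis to $K\langle m_j\rangle_{j\le i}$, examine the short exact sequence of complexes
\begin{equation*}
0\to K\langle m_j\rangle_{j<i}\to K\langle m_j\rangle_{j\le i}\to K\cdot m_i\to 0.
\end{equation*}
For $a\in K$, $\delta(a\cdot m_i)=d_A(a)\cdot m_i\pm a\cdot\delta(m_i)$; the second term lies in $K\langle m_j\rangle_{j<i}$ because $K$ is an ideal of $A$, so the quotient differential identifies $K\cdot m_i$ with $K$ up to a degree shift. Both ends of the sequence are thus acyclic, and the long exact sequence gives acyclicity of the middle term. Limit ordinals pose no problem: $\delta$ is defined as the union of its restrictions, and acyclicity of the kernel survives the filtered colimit.

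The main obstacle I anticipate is precisely the propagation of the acyclicity of the kernel subcomplex at successor stages: without it, the equation $\delta\tilde c=\delta k$ could not be solved inside $K\langle m_j\rangle_{j<i}$ and the induction would stall. Once that point is secured by the short exact sequence argument above, the remainder is routine bookkeeping with the Leibniz rule and the rising condition on generators.
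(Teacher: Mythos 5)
Your proof is correct and follows essentially the same route as the paper: a recursion over the well-ordered set $I$ in which the extension to $m_i$ is obtained by lifting against the surjection $({\tt A}\langle m_j\rangle_{j<i},\delta)\to({\tt B}\langle m_j\rangle_{j<i},d)$. The paper packages the step as a lifting of the cofibrant $A$-module $({\tt A}\langle m_i\rangle,d_A)$ against a trivial fibration in ${\tt Mod}(A)$, while you unwind that lifting property into the explicit cycle-is-a-boundary argument in the acyclic kernel $K\langle m_j\rangle_{j<i}$ -- which is precisely what makes that vertical map a trivial fibration in the first place.
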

\begin{proof}
Lift of the differential is defined recursively in $I$. Base of the recursion is given by $\delta(m_1)=0$.
Assume that $\delta (m_j)$ is already defined for all $j<i$.  Under the identification ${\tt A} \langle m_i\rangle=\tt A$, its extension to $m_i$ is equivalently a map of $A$-modules $({\tt A} \langle m_i\rangle,d_A)\to ({\tt A}\langle m_j\rangle_{j<i},\delta)$. Such a map is given by the lifting diagram in ${\tt Mod}(A)$
\begin{center}
\begin{equation*}
\begin{tikzcd}
&({\tt A}\langle m_j\rangle_{j<i},\delta)\arrow[d,two heads,"\sim"]\\
({\tt A} \langle m_i\rangle,d_A)\arrow[r,"d\circ p"]\arrow[ru,dashed]&({\tt B}\langle m_j\rangle_{j<i},d).
\end{tikzcd}
\end{equation*}
\end{center}
\end{proof}

Let $(A,M)$ be a fibrant SH LR pair. Let
$$A\otimes_k A\to \op{Cyl}(A)\to A$$
be a cylinder decomposition in ${\tt dgca}(k)$. For ${\tt M=A}\langle m_i \rangle_{i\in I}$, suppressing differential from the notation, let
$${\tt A}\langle m_i \rangle_{i\in I}\to  {\tt A}\langle \overset{0}{m_i},\overset{1}{m_i},\overset{\tt I}{m_i} \rangle_{i\in I}\to {\tt A}\langle \overset{0}{m_i},\overset{1}{m_i} \rangle_{i\in I}$$ be the standard path decomposition in ${\tt Mod}(A)$.
 Let $\op{Cyl} ({\tt A})\langle {m_i}\rangle_{i\in I}$ be the lift  of ${\tt A}\langle {m_i}\rangle_{i\in I}$ provided by the above lemma.  $({{\tt A}\otimes_k {\tt A}})\langle \overset{0}{m_i},\overset{1}{m_i}\rangle_{i\in I}$ is a cell complex, with the differential dual to the linear part of the coproduct's differential. Denote $\op{Cyl} ({\tt A})\langle \overset{0}{m_i},\overset{1}{m_i}\rangle_{i\in I}=\op{Cyl}({\tt A})\otimes_{{\tt A}\otimes_k {\tt A}}{({\tt A}\otimes_k {\tt A})}\langle \overset{0}{m_i},\overset{1}{m_i}\rangle_{i\in I}$. Path object ${\tt A}\langle \overset{0}{m_i},\overset{1}{m_i},\overset{\tt I}{m_i} \rangle_{i\in I}$ is by the definition the $1$-shifted mapping cone of the map 
$\nabla: M\oplus M\to M$ defined by $\overset{1}m\mapsto{m}$, and  $\overset{0}m\mapsto{-m}$. Denote by $\phi$ the lift in
\begin{center}
\begin{equation*}
\begin{tikzcd}
&&\op{Cyl} ({\tt A})\langle {m_i}\rangle_{i\in I}\arrow[d,two heads,"\sim"]\\
\op{Cyl} ({\tt A})\langle \overset{0}{m_i},\overset{1}{m_i}\rangle_{i\in I}\arrow[r]\arrow[rru,dashed,"\phi"]\arrow[r,"\sim"]& {\tt A}\langle \overset{0}{m_i},\overset{1}{m_i}\rangle_{i\in I}\arrow[r]\arrow[r,"\nabla"] &{\tt A}\langle {m_i}\rangle_{i\in I}.
\end{tikzcd}
\end{equation*}
\end{center}
Denote  ${\op{Cyl}({\tt A})}\langle \overset{0}{m_i},\overset{1}{m_i},\overset{\tt I}{m_i} \rangle_{i\in I}=\op{Cone}(\phi)[1]$.  Dualizing, one arrives to a factorization of the diagonal map
$$(({{\tt A}\otimes_k {\tt A}})\langle\langle  \overset{0}{m_i^*} ,\overset{1}{m_i^*}\rangle\rangle_{i\in I},d_{\amalg}^0)\overset{i}{\rightarrow} ({\op{Cyl}({\tt A})}\langle \langle \overset{0}{m_i^*},\overset{1}{m_i^*},\overset{\tt I}{m_i^*} \rangle\rangle_{i\in I},d_{\op{Cyl}}^0)\overset{p}{\rightarrow}({\tt A}\langle \langle m_i^* \rangle\rangle_{i\in I}, d_{M^*}^0),$$
in which the map $p$ clearly dualizes to a quasi-isomorphism.
To finalize the construction, it suffices to determine the appropriate positive weight differentials $d_{\op{Cyl}}$ on the formal power series of the middle term. \bigskip

Denote the existing differentials by
$$(({{\tt A}\otimes_k {\tt A}})[[  \overset{0}{m_i^*} ,\overset{1}{m_i^*}]]_{i\in I}, d_{\amalg});\hspace{5pt} ({\tt A}[[ m_i^* ]]_{i\in I}, d_{M^*}).$$
Restriction on $\op{Cyl}(A)$ of a differential $d_{\op{Cyl}}^1$ which satisfies $d_{\op{Cyl}}^1\circ d_{\op{Cyl}(A)}^0 +d_{\op{Cyl}}^0\circ d_{\op{Cyl}}^1=0$ is equivalently a 1-cycle in the dg module of derivations on $\op{Cyl}(A)$ valued in $$({\op{ Cyl}({\tt A})}\langle \langle \overset{0}{m_i^*},\overset{1}{m_i^*},\overset{\tt I}{m_i^*} \rangle\rangle_{i\in I},d_{\op{Cyl}}^0)$$ -- the naive dual of ${\op{Cyl}({\tt A})}\langle \overset{0}{m_i},\overset{1}{m_i},\overset{\tt I}{m_i} \rangle_{i\in I}$, equivalently the projective limit of its dual differential pro-graded module. It is intertwined by $i$ with $d_{\amalg}^1|_A$ if it is contained in the (homotopy) fiber
\begin{center}
\begin{tikzcd}
\op{Fib}_{\op{Cyl}}^{1}\arrow[r,hook]\arrow[d, two heads]&\op{Der} (\op{ Cyl}({ A}),({{\tt Cyl}({\tt A})}\langle \langle \overset{0}{m_i^*},\overset{1}{m_i^*},\overset{\tt I}{m_i^*} \rangle\rangle_{i\in I},d_{\op{Cyl}}^0))\arrow[d,two heads]\\
k\langle d_{\amalg}^{1}|_A \rangle\arrow[r,hook,"i\circ -"]&\op{Der} (A\otimes_k A,({{\tt Cyl}({\tt A})}\langle \langle \overset{0}{m_i^*},\overset{1}{m_i^*},\overset{\tt I}{m_i^*} \rangle\rangle_{i\in I},d_{{\tt Cyl}}^0)),
\end{tikzcd}
\end{center}
and maps by the vertical arrow to $d_{\amalg}^1|_A$.
Similarly, restriction on $A$ of  $d_{M^*}^1$ (pre-composed with $p$) is an element of the homotopy fiber
\begin{center}
\begin{tikzcd}
\op{Fib}_{A}^{1}\arrow[r,hook]\arrow[d, two heads]&\op{Der} (\op{Cyl}({ A}),({\tt A}\langle \langle {m_i^*} \rangle\rangle_{i\in I},d_{M^*}^0))\arrow[d,two heads]\\
k\langle d_{\amalg}^{1}|_A \rangle\arrow[r,hook,"p\circ i\circ -"]&\op{Der} (A\otimes_k A,({\tt A}\langle \langle {m_i^*} \rangle\rangle_{i\in I},d_{M^*}^0)).
\end{tikzcd}
\end{center}
 Assuming that the natural map between cospans of the two homotopy fiber squares are quasi-isomorphisms, it follows that the induced map between the fibers is also a quasi-isomorphism. In the preimage in $\op{Fib}_{\op{Cyl}}^{1}$ of the cycle $d_{M^*}^1|_A$ in  $\op{Fib}_{A}^{A}$ is a cycle $d_{\op{Cyl}}^1$. Hence it satisfies $d_{\op{Cyl}}^1\circ d_{\op{Cyl}(A)} +d_{\op{Cyl}}^0\circ d_{\op{Cyl}}^1=0$ and it is intertwined by both  $i$ and $p$.

That the above-mentioned natural maps are indeed quasi-isomorphisms is obvious in the case when the index set $I$ is finite, as modules in which the derivations are valued are themselves graded-free finite modules, hence cofibrant. Map between them is the base change of a cofibrant module via a weak equivalence, hence weak equivalence, and derivations from semi-free dgca-s preserve weak equivalences. In general, one applies the isomorphism of the below lemma to the nods of above diagrams to conclude the same:
\begin{lemma}
Let $p:A\to B$ be a morphism in ${\tt dgca}(k)$, and let $M\in {\tt Mod}(B)$. Denoting $M^\vee=\op{Hom}_{{\tt Mod}(B)}(M,B),$ there is an isomorphism of dg vector spaces
$$\op{Der}(A,M^\vee)\cong \op{Hom}_{{\tt Mod}(B)}(M,\op{Der}(A,B)).$$
\end{lemma}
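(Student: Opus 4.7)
The plan is to construct the isomorphism as the internal-hom adjunction $\op{Hom}_k(A, \op{Hom}_B(M,B)) \cong \op{Hom}_B(M, \op{Hom}_k(A,B))$, and then check that the subspace of derivations on the left matches the subspace of $B$-linear maps into derivations on the right. Concretely, given $d \in \op{Der}(A, M^\vee)$ I define
$$\Phi(d)\colon M \to \op{Hom}_k(A, B), \qquad \Phi(d)(m)(a) = (-1)^{|m|\,|a|}\, d(a)(m),$$
with the sign dictated by the Koszul convention for the evaluation pairing $M^\vee \otimes M \to B$. The inverse $\Psi$ is defined by the symmetric formula, and the two compositions are identities by the usual swap calculation in the closed symmetric monoidal category of graded $B$-modules.

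The core verifications are three. First, I would check that for each $m \in M$ the map $\Phi(d)(m)\colon A \to B$ is a derivation over $p$; this follows directly from the derivation property of $d$ after evaluating both sides of the Leibniz identity $d(aa') = d(a)\cdot p(a') + (-1)^{|a|} p(a)\cdot d(a')$ on $m$ and using the definition of the $B$-action on $M^\vee$. Second, I would verify that $\Phi(d)\colon M \to \op{Der}(A,B)$ is $B$-linear, using that the $B$-action on $M^\vee$ is by definition dual to the action on $M$, so extracting a factor of $b \in B$ out of $d(a)(bm)$ produces precisely the $B$-action on $\op{Der}(A,B)$ inherited from the target $B$. Third, to get an isomorphism of cochain complexes (dg vector spaces), I would check compatibility with the differentials: the differential on $\op{Der}(A,M^\vee)$ is the graded commutator with the total differential on $M^\vee$, and under $\Phi$ it transfers termwise to the commutator formula defining the differential on $\op{Hom}_B(M, \op{Der}(A,B))$.

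The main obstacle is sign bookkeeping: three sources of signs interact, namely the Koszul rule for the evaluation pairing, the shift in the derivation condition, and the graded-commutative $B$-action on $M^\vee$. Once the sign in the currying formula is chosen so that the Leibniz identity on the left translates formally into the Leibniz identity on the right, the remaining checks are mechanical, and the inverse $\Psi$ is manifestly two-sided by the symmetric monoidal swap identity.
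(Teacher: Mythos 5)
Your proof is correct, but it takes a genuinely different route from the paper's. The paper proves the lemma as a pure chain of adjunction isomorphisms: it first replaces $\op{Der}(A,-)$ by $\op{Hom}_{{\tt Mod}(A)}(\mathbb{L}_A,-)$ (representability of derivations by the K\"ahler differentials of $A$), then applies base change along $p$ and the tensor--hom adjunction in ${\tt Mod}(B)$ twice to move $M$ from the target to the source and back, ending with $\op{Hom}_{{\tt Mod}(B)}(M,\op{Hom}_{{\tt Mod}(A)}(\mathbb{L}_A,B))\cong\op{Hom}_{{\tt Mod}(B)}(M,\op{Der}(A,B))$. In that formulation all signs are absorbed into the standard closed symmetric monoidal structure of graded modules, so nothing is checked by hand; the price is invoking the representing object and its compatibility with base change. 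Your argument instead curries the evaluation pairing directly, $\Phi(d)(m)(a)=(-1)^{|m|\,|a|}d(a)(m)$, and verifies the Leibniz rule over $p$, the $B$-linearity, and the compatibility with the commutator differentials explicitly; this is the paper's adjunction chain unwound at the level of elements. It is more elementary and yields an explicit formula for the isomorphism (useful if one later needs to chase elements through it), at the cost of the Koszul-sign bookkeeping you correctly identify as the only delicate point. Both arguments establish the same natural isomorphism, so your proposal is a valid alternative.
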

\begin{proof}
\begin{equation}
\begin{split}
\op{Der}(A,M^\vee)&\cong \op{Hom}_{{\tt Mod}(A)}(\mathbb{L}_A, \op{Hom}_{{\tt Mod}(B)}(M,B))\cong \op{Hom}_{{\tt Mod}(B)}(B\otimes_A\mathbb{L}_A, \op{Hom}_{{\tt Mod}(B)}(M,B))\\&
\op{Hom}_{{\tt Mod}(B)}(M\otimes_B(B\otimes_A\mathbb{L}_A), B)\cong \op{Hom}_{{\tt Mod}(B)}({M}, \op{Hom}_{{\tt Mod}(B)}(B\otimes_A\mathbb{L}_A,B))\\
&\cong \op{Hom}_{{\tt Mod}(B)}({M}, \op{Hom}_{{\tt Mod}(A)}(\mathbb{L}_A,B)) \cong \op{Hom}_{{\tt Mod}(B)}({M}, \op{Der}(A,B)).
\end{split}
\end{equation}
\end{proof}

Restriction on  $({\op{ Cyl}({\tt A})}\langle \langle \overset{0}{m_i^*},\overset{1}{m_i^*},\overset{\tt I}{m_i^*} \rangle\rangle_{i\in I}$ of a differential $d_{\op{Cyl}}^1$ is defined as follows. First, one defines a provisional differential by
$$\delta_{\op{Cyl}}^1(\overset{0}{m_i})=d_{\amalg}^1(\overset{0}{m_i});\hspace{5pt} \delta_{\op{Cyl}}^1(\overset{1}{m_i})=d_{\amalg}^1(\overset{1}{m_i});\hspace{5pt}\delta_{\op{Cyl}}^1(\overset{\tt I}{m_i})=0;\hspace{5pt}\delta_{\op{Cyl}}^1(a\cdot \overset{0,1,\tt I}{m_i})=d_{\op{Cyl}}^1|_A a\cdot \overset{0,1,\tt I}{m_i}+a\cdot \delta_{\op{Cyl}}^1(\overset{0,1, \tt I}{m_i})$$
Clearly, the provisional differential intertwines $i$ and $p$. A direct verification shows that the graded commutator $[\delta_{\op{Cyl}}^1,d_{\op{Cyl}}^0]$ is $\op{Cyl}(A)$-linear:
\begin{equation*}
\begin{split}
[\delta_{\op{Cyl}}^1,d_{\op{Cyl}}^0]am&=[d_{\op{Cyl}}^1\circ d_{\op{Cyl}}^0](a)\cdot m +a\cdot [\delta_{\op{Cyl}}^1,d_{\op{Cyl}}^0](m)+(-1)^{|a|-1}(d_{\op{Cyl}}^1(a) d_{\op{Cyl}}^0(m)+d_{\op{Cyl}}^0(a) \delta_{\op{Cyl}}^1(m))\\
&+(-1)^{|a|}(d_{\op{Cyl}}^0(a) \delta_{\op{Cyl}}^1(m)+d_{\op{Cyl}}^1(a) d_{\op{Cyl}}^0(m))=a\cdot [\delta_{\op{Cyl}}^1,d_{\op{Cyl}}^0](m)
\end{split}
\end{equation*}
Given a (differential) graded-flat module $N$, denote by ${N^*}^{\widehat{\odot}2}$ the (differential) pro-graded dual of the symmetric tensor product $N\odot N$. If one can find a $\tt{Cyl}( A)$-linear map 
$$\Delta:{\op{ Cyl}({\tt A})}\langle \langle \overset{0}{m_i^*},\overset{1}{m_i^*},\overset{\tt I}{m_i^*} \rangle\rangle_{i\in I}\to({\op{ Cyl}({\tt A})}\langle \langle \overset{0}{m_i^*},\overset{1}{m_i^*},\overset{\tt I}{m_i^*} \rangle\rangle_{i\in I})^{\widehat{\odot}2}[-1]$$
which vanishes when composed by either of the maps $i$ and $p$, and such that $[\Delta+\delta_{\op{Cyl}}^1,d_{\op{Cyl}}^0]=0$, then $\Delta+\delta_{\op{Cyl}}^1$ is a required restriction of $d_{\op{Cyl}}^1$.

Consider the following diagram of graded vector spaces of $\op{Pro}({\tt Mod}(\op{Cyl}A))$-morphisms:
\begin{center}
\begin{tikzcd}
\op{Ker}_{u}\arrow[r,hook]\arrow[d]&\op{Hom}^{\bullet} (\op{ Cyl}{ A}\langle \langle \overset{0,1,\tt I}{m_i^*}\rangle\rangle,(\op{ Cyl}{ A}\langle \langle \overset{0,1,\tt I}{m_i^*}\rangle\rangle)^{\widehat{\odot}2})\arrow[r,two heads]\arrow[d]&\op{Hom}^{\bullet} (\op{ Cyl}{ A}\langle \langle \overset{0,1}{m_i^*}\rangle\rangle,(\op{ Cyl}{ A}\langle \langle \overset{0,1,\tt I}{m_i^*}\rangle\rangle)^{\widehat{\odot}2})\arrow[d]\\
\op{Ker}_{d}\arrow[r,hook]&\op{Hom}^{\bullet} (\op{ Cyl}{ A}\langle \langle \overset{0,1,\tt I}{m_i^*}\rangle\rangle,({ A}\langle \langle {m_i^*}\rangle\rangle)^{\widehat{\odot}2})\arrow[r,two heads]&\op{Hom}^{\bullet} (\op{ Cyl}{ A}\langle \langle \overset{0,1}{m_i^*}\rangle\rangle,({ A}\langle \langle {m_i^*}\rangle\rangle)^{\widehat{\odot}2})
\end{tikzcd}
\end{center}
Any degree 1 element of $\op{Ker}_{u}$ whose image in $\op{Ker}_{d}$ equals zero is a map $\Delta$ which vanishes when composed by either of the maps $i$ and $p$. If $I$ is finite, one is dealing with perfect $\op{Cyl }A$ and $A$-modules which safely dualize without passing to the pro-category, and the above diagram represents a weak equivalence of homotopy fiber sequences in ${\tt Mod}(\op{Cyl }A)$ . 
$\op{Cone}(\op{Ker}_{u}\to \op{Ker}_{d})$ is acyclic, and $-[\delta^1_{\op{Cyl}},d^0_{\op{Cyl}}]$ is a cycle, hence a boundary. Thus there exist $\Delta'\in \op{Ker}_{u}$, $D\in \op{Ker}_{d}$ such that $\Delta'$ maps to $[D,d^0_{M^*}]$, and
$[\Delta',d^0_{\op{Cyl}}]=-[\delta^1_{\op{Cyl}},d^0_{\op{Cyl}}]$.
 $D$ lifts to a map $\overline{D}\in \op{Ker}_{u}$. Namely, as it vanishes on generators labeled by $0$ and $1$, it suffices to lift its restriction to the graded submodule generated by $\tt {I}$-labeled terms and extend the lift by zero. Finally, $\Delta=\Delta'-[\overline{D},d^0_{\op{Cyl}}]$ vanishes when composed by either of the maps $i$ and $p$, and $[\Delta,d^0_{\op{Cyl}}]=-[\delta^1_{\op{Cyl}},d^0_{\op{Cyl}}]$.

The general case requires dualization
\begin{center}
\begin{tikzcd}[scale cd=0.9]
\overline{\op{Ker}}_{u}\arrow[r,hook]\arrow[d,"\sim"]&\op{Hom}_{{\tt Mod}(\op{Cyl }A)}^{\bullet} ((\op{ Cyl}{ A}\langle  \overset{0,1,\tt I}{m_i}\rangle)^{{\odot}2},\op{ Cyl}{ A}\langle  \overset{0,1,\tt I}{m_i}\rangle)\arrow[r,two heads]\arrow[d,"\sim"]
&\op{Hom}_{{\tt Mod}(\op{Cyl }A)}^{\bullet} ((\op{ Cyl}{ A}\langle \overset{0,1,\tt I}{m_i}\rangle)^{{\odot}2},\op{ Cyl}{ A}\langle \overset{0,1}{m_i}\rangle)\arrow[d,"\sim"]\\
\overline{\op{Ker}}_{m}\arrow[r,hook]\arrow[d,"\sim"]&\op{Hom}_{{\tt Mod}(A)}^{\bullet} ((A \langle \overset{0,1,\tt I}{m_i}\rangle)^{{\odot}2},A\langle  \overset{0,1,\tt I}{m_i}\rangle)\arrow[r,two heads]\arrow[d,"\sim"]&\op{Hom}_{{\tt Mod}(A)}^{\bullet} ((A\langle \overset{0,1,\tt I}{m_i}\rangle)^{{\odot}2},A\langle \overset{0,1}{m_i}\rangle)\arrow[d,"\sim"]\\
\overline{\op{Ker}}_{d}\arrow[r,hook]&\op{Hom}_{{\tt Mod}(A)}^{\bullet} (({ A}\langle {m_i}\rangle)^{{\odot}2},{ A} \langle \overset{0,1,\tt I}{m_i}\rangle)\arrow[r,two heads]&\op{Hom}_{{\tt Mod}(A)}^{\bullet} (({ A}\langle {m_i}\rangle)^{{\odot}2},{ A}\langle \overset{0,1}{m_i}\rangle)
\end{tikzcd}
\end{center}
to obtain again weak equivalences between homotopy fiber sequences.

As before, $\op{Cone}(\overline{\op{Ker}}_{u}\to\overline{\op{Ker}}_{d})$ is 
acyclic, and $[(\delta^1_{\op{Cyl}})^*,(d^0_{\op{Cyl}})^*]$
is a cycle, hence a boundary. Thus there exist ${\Delta'}^*\in \overline{\op{Ker}}_{u}$, $D^*\in \overline{\op{Ker}}_{d}$ 
such that ${\Delta'}^*$ maps to $[D^*,{d^0}^*]$, and
$[{\Delta'}^*,{d^0_{\op{Cyl}}}^*]=-[{\delta^1_{\op{Cyl}}}^*,{d^0_{\op{Cyl}}}^*]$.
 $D^*$ lifts to a map ${\overline{D}}^*\in \overline{\op{Ker}}_{u}$. Finally, the dual $\Delta$  of $\Delta^*={\Delta'}^*-[{\overline{D}}^*,{d^0_{\op{Cyl}}}^*]$, vanishes when composed by either of the maps $i$ and $p$, and $$[\Delta,d^0_{\op{Cyl}}]^*=[\Delta^*,{d^0_{\op{Cyl}}}^*]=-[{\delta^1_{\op{Cyl}}}^*,{d^0_{\op{Cyl}}}^*]=[{\delta^1_{\op{Cyl}}},{d^0_{\op{Cyl}}}]^*,$$ hence $[\Delta,d^0_{\op{Cyl}}]=[{\delta^1_{\op{Cyl}}},{d^0_{\op{Cyl}}}].$ This finalizes the induction's basis. \bigskip

For a fixed $n\in \mathbb{N}$, assume differentials  ${d^0_{\op{Cyl}}},\ldots {d^n_{\op{Cyl}}}$ such that for all $k=1,\ldots, n$
\begin{itemize}
\item  $p$ and $i$ intertwine weight $k$ differentials;
\item $\sum_{l=0}^k d^l_{\op{Cyl}}\circ d^{k-l}_{\op{Cyl}}=0$
\end{itemize}
have been found.

Notice first that it follows from Jacobi identity in the graded Lie algebra of derivations (with cohomological grading) that $[\sum_{l=1}^n d^l_{\op{Cyl}}\circ d^{n+1-l}_{\op{Cyl}},d^0_{{\op{Cyl}}}]=0$. Indeed,
\begin{equation*}
\begin{split}
[\sum_{l=1}^n d^l_{\op{Cyl}}\circ d^{n+1-l}_{\op{Cyl}},d^0_{{\op{Cyl}}}]=&\frac{1}{2}\sum_{l=1}^n[ [d^l_{\op{Cyl}}, d^{n+1-l}_{\op{Cyl}}],d^0_{{\op{Cyl}}}=\sum_{\substack{k+l=n+1\\k,l\geq 1}}[ d^{k}_{\op{Cyl}}, [d^{l}_{\op{Cyl}},d^0_{{\op{Cyl}}}]]\\
=&
-\frac{1}{2}\sum_{\substack{k+l+m=n+1\\k,l,m\geq 1}}[ d^{k}_{\op{Cyl}}, [d^{l}_{\op{Cyl}},d^m_{{\op{Cyl}}}]]=0.
\end{split}
\end{equation*}
Hence, $-\sum_{l=1}^n d^l_{\op{Cyl}}\circ d^{n+1-l}_{\op{Cyl}}|_A$ is a cycle in $\op{Fib}_{\op{Cyl}}^{n+1}$ which is mapped by the vertical arrow in the below diagram to $[d_{\amalg}^{n+1}|_A,d_{\amalg}^{0}]$.
\begin{center}
\begin{tikzcd}
\op{Fib}_{\op{Cyl}}^{n+1}\arrow[r,hook]\arrow[d, two heads]&\op{Der} (\op{ Cyl}({ A}),({{\tt Cyl}({\tt A})}\langle \langle \overset{0,1,\tt I}{m_i^*} \rangle\rangle_{i\in I}^{\widehat{\odot} (n+1)},d_{\op{Cyl}}^0))\arrow[d,two heads]\\
(k\langle d_{\amalg}^{n+1}|_A ,[d_{\amalg}^{n+1}|_A,d_{\amalg}^{0}]\rangle,[-,d_{\amalg}^{0}])   \arrow[r,hook,"i\circ -"]&\op{Der} (A\otimes_k A,({{\tt Cyl}({\tt A})}\langle \langle \overset{0,1,\tt I}{m_i^*} \rangle\rangle_{i\in I}^{\widehat{\odot} (n+1)},d_{{\op{Cyl}}}^0)),
\end{tikzcd}
\end{center}
Its image in $\op{Fib}_{M^*}^{n+1}$ (see the below diagram) is a boundary
$$-\sum_{l=1}^n d^l_{M^*}\circ d^{n+1-l}_{M^*}|_A= [d^{n+1}_{M^*}|_A,d^0_{M^*}].$$
\begin{center}
\begin{tikzcd}
\op{Fib}_{M^*}^{n+1}\arrow[r,hook]\arrow[d, two heads]&\op{Der} (\op{Cyl}({ A}),({\tt A}\langle \langle {m_i^*} \rangle\rangle_{i\in I}^{\widehat{\odot} (n+1)},d_{M^*}^0))\arrow[d,two heads]\\
(k\langle d_{\amalg}^{n+1}|_A ,[d_{\amalg}^{n+1}|_A,d_{\amalg}^{0}]\rangle,[-,d_{\amalg}^{0}])  \arrow[r,hook,"p\circ i\circ -"]&\op{Der} (A\otimes_k A,({\tt A}\langle \langle {m_i^*} \rangle\rangle_{i\in I}^{\widehat{\odot} (n+1)},d_{M^*}^0)).
\end{tikzcd}
\end{center}

Consequently, $d^{n+1}_{M^*}-\sum_{l=1}^n d^l_{\op{Cyl}}\circ d^{n+1-l}_{\op{Cyl}}|_A$ is a cycle in $\op{Cone}(\op{Fib}_{\op{Cyl}}^{n+1}\to \op{Fib}_{M^*}^{n+1})$, hence a boundary. Consequently, there exist derivations $D\in \op{Fib}_{M^*}^{n+1}$, $\Delta\in\op{Fib}_{\op{Cyl}}^{n+1}$ such that $\Delta$ maps to $[D,d^0_{M^*}]+d^{n+1}_{M^*}$, and $[\Delta,d^0_{\op{Cyl}}]=-\sum_{l=1}^n d^l_{\op{Cyl}}\circ d^{n+1-l}_{\op{Cyl}}|_A$.

Denote by $\overline{D}$ a lift of $D$ to $\op{Fib}_{\op{Cyl}}^{n+1}$.  $d^{n+1}_{\op{Cyl}}|_A:=\Delta+[\overline{D},d^0_{\op{Cyl}}]$ is intertwined by $i$ and $p$ with the corresponding weight $n+1$ diffrentials and such that $\sum_{l=0}^{n+1} d^l_{\op{Cyl}}\circ d^{k-l}_{\op{Cyl}}|_A=0.$

Restriction on  $({\op{ Cyl}({\tt A})}\langle \langle \overset{0}{m_i^*},\overset{1}{m_i^*},\overset{\tt I}{m_i^*} \rangle\rangle_{i\in I}$ of a differential $d_{\op{Cyl}}^{n+1}$ is defined as follows. First, one defines a provisional differential by
$$
\begin{matrix*}[l]  
 \delta_{\op{Cyl}}^{n+1}(\overset{0}{m_i})=d_{\amalg}^{n+1}(\overset{0}{m_i}); &  \delta_{\op{Cyl}}^{n+1}(\overset{1}{m_i})=d_{\amalg}^{n+1}(\overset{1}{m_i}); \\        
 \delta_{\op{Cyl}}^{n+1}(\overset{\tt I}{m_i})=0; & \delta_{\op{Cyl}}^{n+1}(a\cdot \overset{0,1,\tt I}{m_i})=d_{\op{Cyl}}^{n+1}|_A a\cdot \overset{0,1,\tt I}{m_i}+a\cdot \delta_{\op{Cyl}}^{n+1}(\overset{0,1, \tt I}{m_i})  .       
\end{matrix*} 
$$
Clearly, the provisional differential intertwines $i$ and $p$, and $[\delta_{\op{Cyl}}^{n+1},d_{\op{Cyl}}^0]$ is $A$-linear.
If one can find an $\tt{Cyl}( A)$-linear map 
$$\Delta:{\op{ Cyl}({\tt A})}\langle \langle \overset{0}{m_i^*},\overset{1}{m_i^*},\overset{\tt I}{m_i^*} \rangle\rangle_{i\in I}\to({\op{ Cyl}({\tt A})}\langle \langle \overset{0}{m_i^*},\overset{1}{m_i^*},\overset{\tt I}{m_i^*} \rangle\rangle_{i\in I})^{\widehat{\odot}(n+1)}[-1]$$
which vanishes when composed by either of the maps $i$ and $p$, and such that 
\begin{equation}\label{step}[\Delta+\delta_{\op{Cyl}}^{n+1},d_{\op{Cyl}}^0]=-\sum_{l=1}^n d^l_{\op{Cyl}}\circ d^{n+1-l}_{\op{Cyl}}=-\frac{1}{2}\sum_{l=1}^n [d^l_{\op{Cyl}}, d^{n+1-l}_{\op{Cyl}}]\end{equation} then $\Delta+\delta_{\op{Cyl}}^1$ is a required restriction of $d_{\op{Cyl}}^1$. Notice that $[d^l_{\op{Cyl}}, d^{n+1-l}_{\op{Cyl}}]$ is $\tt{Cyl}( A)$-linear.

Consider the following diagram of graded vector spaces of $\op{Pro}({\tt Mod}(\op{Cyl}A))$-morphisms:
\begin{center}
\begin{tikzcd}[scale cd=0.95]
\op{Ker}_{u}\arrow[r,hook]\arrow[d]&\op{Hom}^{\bullet} (\op{ Cyl}{ A}\langle \langle \overset{0,1,\tt I}{m_i^*}\rangle\rangle,(\op{ Cyl}{ A}\langle \langle \overset{0,1,\tt I}{m_i^*}\rangle\rangle)^{\widehat{\odot}{n+1}})\arrow[r,two heads]\arrow[d]&\op{Hom}^{\bullet} (\op{ Cyl}{ A}\langle \langle \overset{0,1}{m_i^*}\rangle\rangle,(\op{ Cyl}{ A}\langle \langle \overset{0,1,\tt I}{m_i^*}\rangle\rangle)^{\widehat{\odot}{n+1}})\arrow[d]\\
\op{Ker}_{d}\arrow[r,hook]&\op{Hom}^{\bullet} (\op{ Cyl}{ A}\langle \langle \overset{0,1,\tt I}{m_i^*}\rangle\rangle,({ A}\langle \langle {m_i^*}\rangle\rangle)^{\widehat{\odot}{n+1}})\arrow[r,two heads]&\op{Hom}^{\bullet} (\op{ Cyl}{ A}\langle \langle \overset{0,1}{m_i^*}\rangle\rangle,({ A}\langle \langle {m_i^*}\rangle\rangle)^{\widehat{\odot}{n+1}})
\end{tikzcd}
\end{center}
Any degree 1 element of $\op{Ker}_{u}$ whose image in $\op{Ker}_{d}$ equals zero is a map $\Delta$ which vanishes when composed by either of the maps $i$ and $p$. If $I$ is finite, one is dealing with perfect $\op{Cyl }A$ and $A$-modules which safely dualize without passing to the pro-category, and the above diagram represents a weak equivalence of homotopy fiber sequences in ${\tt Mod}(\op{Cyl }A)$ . 
$\op{Cone}(\op{Ker}_{u}\to \op{Ker}_{d})$ is acyclic, and $$-[\delta^{n+1}_{\op{Cyl}},d^0_{\op{Cyl}}]-\frac{1}{2}\sum_{l=1}^n [d^l_{\op{Cyl}}, d^{n+1-l}_{\op{Cyl}}]$$ is a cycle, hence a boundary. Thus there exist $\Delta'\in \op{Ker}_{u}$, $D\in \op{Ker}_{d}$ such that $\Delta'$ maps to $[D,d^0]$, and
$$[\Delta',d^0_{\op{Cyl}}]=-[\delta^{n+1}_{\op{Cyl}},d^0_{\op{Cyl}}]-\frac{1}{2}\sum_{l=1}^n [d^l_{\op{Cyl}}, d^{n+1-l}_{\op{Cyl}}].$$
 $D$ lifts to a map $\overline{D}\in \op{Ker}_{u}$. Finally $\Delta=\Delta'-[\overline{D},d^0_{\op{Cyl}}]$, vanishes when composed by either of the maps $i$ and $p$, and satisfies the equation (\ref{step}). The general case is straight-forward.

\vfill
\Addresses

\end{document}